\newcommand{\bmat}[1]{\begin{bmatrix}#1\end{bmatrix}}
\newcommand{\Prob}[1]{\mathbb{P} \left( #1 \right)}
\newcommand{\Exp}[1]{\mathbb{E} \left[ #1 \right]}
\newcommand{\Expx}[1]{\mathbb{E}_{x_0 \sim \mathcal{D}} \left[ #1 \right]}
\newcommand{\Expxt}[1]{\mathbb{E}_{x_0 \sim \mathcal{D}, \omega(0)\sim \rho} \left[ #1 \right]}
\newcommand{\1}[1]{\mathbf{1}_{#1}}
\newcommand{\R}{\mathbb{R}}
\newcommand{\tr}[1]{\text{tr} \left( #1 \right)}
\newcommand{\diag}[1]{\textup{diag}\! \left( #1 \right)}
\newcommand{\sumtinf}{\sum_{t=0}^\infty}
\newcommand{\EPnext}{\mathcal{E}_i(P)}
\newcommand{\EPKnext}{\mathcal{E}_i(P^{\hat{K}})}
\newcommand{\dEPKnext}{d\mathcal{E}_i(P^{\hat{K}})}
\newcommand{\Enext}[1]{\mathcal{E}_i(#1)}
\newcommand{\EPhatnext}{\hat{\mathcal{E}}(P^{\hat{K}})}
\newcommand{\deltaK}{\Delta \hat{K}}
\newcommand{\deltaKi}{\Delta K_i}
\newtheorem{mythm}{Theorem}
\newtheorem{mylemma}[mythm]{Lemma}
\newtheorem*{myassump}{Assumptions}
\newtheorem*{myprb}{Problem 1: Policy Optimization for MJLS}
\theoremstyle{definition}
\newtheorem*{myprf}{Proof Sketch}
\title{\LARGE \bf
 Convergence Guarantees of Policy Optimization Methods for \\ Markovian Jump Linear Systems
}
\author{
Joao Paulo Jansch-Porto\thanks{J. P. Jansch-Porto is with the Department of Mechanical Science \& Engineering,
  University of Illinois at Urbana-Champaign, Email:   \texttt{janschp2@illinois.edu} }\and
  Bin Hu\thanks{B.~Hu is with the  Department of Electrical \& Computer Engineering,
  University of Illinois at Urbana-Champaign, Email:   \texttt{binhu7@illinois.edu}} \and
    Geir E. Dullerud \thanks{ G. E. Dullerud is with the Department of Mechanical Science \& Engineering,
  University of Illinois at Urbana-Champaign, Email:   \texttt{dullerud@illinois.edu} } 
}
\begin{document}

\maketitle

\begin{abstract}
Recently, policy optimization for control purposes has received renewed attention due to the increasing interest in reinforcement learning. 
In this paper, we investigate the convergence of policy optimization for quadratic control of Markovian jump linear systems (MJLS).
First, we study the optimization landscape of direct policy optimization for MJLS, and, in particular, show that despite the non-convexity of the resultant problem the unique stationary point is the global optimal solution. 
Next, we prove that the Gauss-Newton method and the natural policy gradient method converge to the optimal state feedback controller for MJLS at a linear rate if initialized at a controller which stabilizes the closed-loop dynamics in the mean square sense.
We propose a novel Lyapunov argument to fix a key stability issue in the convergence proof. 
Finally, we present a numerical example to support our theory.
Our work brings new insights for understanding the performance of policy learning methods on controlling unknown MJLS.
\end{abstract}

\section{Introduction}
\label{sec:intro}

Recently, reinforcement learning (RL) \cite{sutton2018reinforcement} has achieved impressive performance on a class of continuous control problems including locomotion~\cite{schulman2015high} and robot manipulation~\cite{levine2016end}. Policy-based optimization is the main engine behind these RL applications \cite{duan2016benchmarking}. Specifically, the natural policy gradient method \cite{kakade2002natural} and several related methods including TRPO \cite{schulman2015trust}, natural AC \cite{peters2008natural}, and PPO \cite{schulman2017proximal} are among the most popular RL algorithms for continuous control tasks. These methods enable flexible policy parameterizations, and are end-to-end in the sense that the control performance metrics are directly optimized. 

Despite the empirical successes of policy optimization methods, how to choose these algorithms for a specific control task is still more of an art than a science \cite{henderson2018deep, rajeswaran2017towards}.
This motivates a recent research trend focusing on understanding  the performances of RL algorithms on simplified benchmarks. Specifically, significant research has recently been conducted to understand the performance of various model-free or model-based RL algorithms on the classic Linear Quadratic Regulator (LQR) problem \cite{pmlr-v80-fazel18a, dean2017sample, malik2018derivative,tu2018least,tu2018gap, dean2018regret, abbasi2011regret, abbasi2018regret,Krauth2019,mohammadi2019convergence,mohammadi2019global}. 
In \cite{pmlr-v80-fazel18a}, it is shown that despite the non-convexity in the objective function, policy gradient methods can still provably learn the optimal LQR controller. This provides a good sanity check for policy optimization on further control applications. 

Built upon the good progress on understanding RL for the LQR problem, this paper moves one step further and studies policy optimization for Markov Jump Linear Systems (MJLS) \cite{costa2006discrete} from a theoretical perspective.
MJLS form an important class of systems that arise in many control applications \cite{bar1993estimation,fox2011tsp,hamsa2016cdc, Pavlovic2000LearningSL,sworder1999estimation,varga2013ijrnc}. Recently, stochastic methods in machine learning are also modeled as jump systems \cite{hu2017unified,hu2019characterizing}.  The research on MJLS has great practical value while in the mean time  also provides many new interesting theoretical problems. 
In the classic LQR problem, one aims to control a linear time-invariant (LTI) system whose state/input matrices do not change over time. On the other hand, the state/input matrices of a Markov jump linear system are functions of a jump parameter that is sampled from an underlying Markov chain. Consequently,
the behaviors of MJLS become very different from  those of LTI systems.  
Controlling unknown MJLS poses many new challenges over  traditional LQR due to the appearance of this Markov jump parameter. For example, in a model-based approach, one has to learn both the state/input matrices and the transition probability of the jump parameter; here, it is the coupling effect between the state/input matrices and the jump parameter distribution causes the main difficulty.
Therefore, the quadratic control of MJLS is a meaningful benchmark for further understanding of RL algorithms.

Obviously, studying policy optimization on MJLS control problems is important for further understanding of policy-based RL algorithms. 
In this paper, we present various convergence guarantees for policy optimization methods on the quadratic control of MJLS. First, we study the optimization landscape of direct policy optimization for MJLS, and demonstrate that despite the non-convexity of the resultant problem, the unique stationary point is the global optimal solution. Next, we prove that the Gauss-Newton method and the natural policy gradient method converge to the optimal state feedback controller for MJLS at a linear rate if a stabilizing initial controller is used. We introduce a novel Lyapunov argument to fix a key stability issue in the convergence proof. Finally, numerical simulations are provided to support our theory.

The most relevant reference of our paper is \cite{pmlr-v80-fazel18a}. Our results generalize the convergence theory of the Gauss-Newton method and the natural policy gradient method in \cite{pmlr-v80-fazel18a} to the MJLS case. This extension is non-trivial. Specifically,
one key issue in the convergence proof is to ensure that the iterates never wander into the region of instability. 
In \cite{pmlr-v80-fazel18a}, the system is LTI and the stability argument can be made by using the properties of spectral radius of the state matrix. For MJLS, one cannot directly make such arguments any more due to the stochastic nature of the system. Alternatively, we propose a novel Lyapunov argument to show that the resultant controller is always stabilizing for the MJLS in the mean square sense along the optimization trajectory of the Gauss-Newton method and the natural policy gradient method, if learning rates are chosen properly.

\section{Background and Preliminaries}
\label{sec:background}

\subsection{Notation}
We denote the set of real numbers by \(\R\).
Let \(Z\) be a square matrix, we use the notation \(Z^T\), \( \|Z\| \),  \(\tr{Z}\), \(\sigma_{\min}(Z)\) to denote its transpose, spectral norm, trace, and minimum singular value, respectively. We indicate positive definite and positive semidefinite matrices by \(Z\succ 0\) and \(Z \succeq 0\), respectively.
Given matrices \(\{D_i\}_{i = 1}^m\),  let \( \diag{D_1, \ldots, D_m}\)  denote the  block diagonal matrix whose $(i,i)$-th block is $D_i$. 
Given a function $f$, we use $df$ to denote its total derivative.

\subsection{Quadratic Control of Markovian Jump Linear Systems}
A Markovian jump linear system is governed by the following discrete-time state-space model
\begin{equation} \label{eq:ltv}
x_{t+1} = A_{\omega(t)} x_t + B_{\omega(t)} u_t
\end{equation}
where \( x_t \in \R^d\) is the system state at time \(t\in\mathbb{N}_0\), and \(u_t \in \R^k \) corresponds to the control action at time~\(t\).
The initial state \(x_0\) is assumed to have a distribution \(\mathcal{D}\).
 The system matrices \( A_{\omega(t)} \in \R^{d\times d}\) and \(B_{\omega(t)} \in \R^{d\times k} \) depend on the switching parameter $\omega(t)$, which takes values on \( \Omega:=\{1, \ldots, n_s \} \) for each $t$. Obviously, we have \(A_{\omega(t)}\in\{A_1, \ldots, A_{n_s}\}\) and \(B_{\omega(t)}\in\{B_1, \ldots, B_{n_s}\}\) for all $t$.
The jump parameter \(\omega(t)\) forms a discrete-time Markov chain sampled from \(\Omega \).
The transition probabilities and initial distribution of $\omega(t)$ are given by 
\begin{equation}\label{eq:prob}
p_{ij} = \Prob{\omega(t+1) = j | \omega(t) = i} \text{ and } \rho = \bmat{\rho_1\! &\! \!\cdots\!\! & \!\rho_{n_s}}^T
\end{equation}
respectively.
The transition probabilities satisfy \(p_{ij} \geq 0\) and \(\sum_{j=1}^{n_s} p_{ij} = 1\) for each \(i \in \Omega\). The initial distribution satisfies \( \sum_{i\in\Omega} \rho_i = 1 \).

In this paper, we focus on the quadratic control problem whose objective is to choose the control actions \(\{u_t\}\) to minimize the following cost function
\begin{equation} \label{eq:switched_cost}
C = \Expxt{\sumtinf x_t^T Q_{\omega(t)} x_t + u_t^T R_{\omega(t)} u_t},
\end{equation}
where it is assumed that \(Q_{\omega(t)} \succ 0\) and \(R_{\omega(t)} \succ 0\) for each \(t\).
This problem can be viewed as the MJLS counterpart of the standard LQR problem, and hence is termed as ``MJLS LQR problem."
The optimal controller to this MJLS LQR problem, defined by dynamics (\ref{eq:ltv}), cost (\ref{eq:switched_cost}), and switching probabilities (\ref{eq:prob}), can be computed by solving a system of coupled Riccati Algebraic Equations~\cite{fragoso}, which we now describe. First, it is known that 
the optimal cost can be achieved by a linear state feedback of the form
\begin{equation}\label{eq:control}
u_t = -K_{\omega(t)} x_t
\end{equation}
with \(  K_{\omega(t)} \in \R^{k \times d} \) for each \( \omega(t) \in \Omega \).
One can solve $K_i$ for all $i\in \Omega$ as follows.
Let $\mathcal{E}_i(P) \coloneqq \Exp{ P_{\omega(t+1)} \middle| \omega(t) = i } = \sum_{j = 1}^{n_s} p_{ij} P_j$.
Formally, let \( \{ P_i \}_{i\in\Omega} \) be the unique positive definite solution to the following equations:
\begin{align}\label{eq:markov_riccati}
P_i = Q_i &+  A_i^T \EPnext A_i - A_i^T \EPnext B_i \times \nonumber\\
&\left( R_i + B_i^T \EPnext B_i \right)^{-1} B_i^T \EPnext A_i.
\end{align}
It can be shown that the linear state feedback controller that minimizes the cost function (\ref{eq:switched_cost}) is given by
\begin{equation}\label{eq:opt_k}
K^*_i =\left( R_i + B_i^T \EPnext B_i \right)^{-1} B_i^T \EPnext A_i
\end{equation}

We remark that if \({\omega(t)} = {\omega(t+n_s)}\) for all \(t\), then the system is said to be periodic with period \( n_s \).
Linear periodic systems have been widely studied~\cite{Bittanti1991,293179} and are just a special case of MJLS.
If \(n_s = 1\), then the MJLS just becomes a linear time-invariant (LTI) system.

\subsection{Policy Optimization for Quadratic Control of LTI Systems}
\label{sec:LQRreview}
Before proceeding to policy optimization of MJLS, here we review policy gradient methods for the quadratic control of  LTI systems~\cite{pmlr-v80-fazel18a}. Consider the LTI system $x_{t+1}=A x_t+B u_t$ with an initial state distribution $\mathcal{D}$ and a static state feedback controller $u_t=-Kx_t$.
We adopt a standard quadratic cost function which can be calculated as
\begin{align} \label{eq:lti_cost}
C(K) &= \Expx{ \sum_{t=0}^\infty x_t^T Q x_t + u_t^T R u_t }\nonumber \\
&=\Expx{ \sum_{t=0}^\infty x_t^T (Q+K^T R K) x_t}.
\end{align}
Obviously, the cost in \eqref{eq:lti_cost} can be computed as
$C(K) = \Expx{x_0^T P_K x_0}$ where  $P_K$ is the solution to the Lyapunov equation
$P_K = Q + K^T R K + (A - B K)^T P_K (A - B K)$.
It is also well known~\cite{maartensson2009gradient, pmlr-v80-fazel18a} that the gradient of~(\ref{eq:lti_cost}) with respect to \(K\)  can be calculated as
\begin{equation*}
\nabla C(K) = 2 \left(  \left( R + B^T P_K B\right) K - B^T P_K A \right) \Sigma_K.
\end{equation*}
where $\Sigma_K$ is the state correlation matrix, i.e. 
$\Sigma_K  = \Expx{\sum_{t=0}^\infty x_t x_t^T}$. 
Based on this gradient formula, one can optimize \eqref{eq:lti_cost} using the (deterministic) policy gradient method $K'\leftarrow K-\eta \nabla C(K)$, the natural policy gradient method $K'\leftarrow K-\eta \nabla C(K) \Sigma_K^{-1}$, or the Gauss-Newton method $K'\leftarrow K-\eta (R+B^TPB)^{-1} \nabla C(K)\Sigma_K^{-1}$. More explanations for these  methods can be found in \cite{pmlr-v80-fazel18a}.

In \cite{pmlr-v80-fazel18a}, it is shown that there exists a unique $K^*$ such that $\nabla C(K^*)=0$ if $\Expx{ x_0 x_0^T}$ is full rank.  In addition, all the above methods are shown to converge to $K^*$ linearly if a stabilizing initial policy is used.

\section{Policy Gradient and Optimization Landscape}
\label{sec:pg}
Now we focus on the policy optimization of the MJLS LQR problem. Since we know the optimal cost can be achieved by a linear state feedback, it is reasonable to  restrict the policy search within the class of linear state feedback controllers.
Specifically, we can set \(\hat{K} = \bmat{K_1 & \cdots & K_{n_s} }\), where each of the components is the feedback gain of the corresponding mode. 
With this notation, we 
consider the following policy optimization problem whose  decision variable is $\hat{K}$.

\begin{myprb}
\begin{align*}
\text{minimize:} &\quad \text{cost } C(\hat K),\text{ given in (\ref{eq:switched_cost})}\\
\null \text{subject to:} &\quad \text{state dynamics, given in (\ref{eq:ltv})} \\
        &\quad \text{control actions, given in (\ref{eq:control})}\\
        &\quad \text{transition probabilities, given in (\ref{eq:prob})}\\
   & \quad \text{stability constraint, } \hat{K}\text{ stabilizing \eqref{eq:ltv} in the}\\
   & \qquad \text{mean square sense.}
\end{align*}
\end{myprb}

In this section, we present an explicit formula for the policy gradient $\nabla C(\hat{K})$ and discuss the optimization landscape for the above problem.
We want to emphasize that the above problem is indeed a constrained optimization problem. The feasible set of the above problem consists of all $\hat{K}$ stabilizing the closed-loop dynamics in the mean square sense (and hence yielding finite $C(\hat K)$). We denote this feasible set as $\mathcal{K}$.
For $\hat K \notin \mathcal{K}$, the cost in \eqref{eq:switched_cost} can blow up to infinity, and the differentiability is also an issue.
For $\hat K \in \mathcal{K}$, the cost $C(\hat K)$ is finite and differentiable.
To obtain the formula for $\nabla C(\hat K)$, we can first  rewrite the quadratic cost (\ref{eq:switched_cost}) as
\begin{align} \label{eq:markov_cost}
C(\hat K) &=  \Expxt{x_0^T P_{\omega(0)}^{\hat K} x_0}\nonumber \\
&= \Expx{x_0^T \left( \sum_{i\in\Omega } \rho_i P_i^{\hat{K}} \right) x_0},
\end{align}
where $P_i^{\hat{K}}$ is defined to be the solution to the coupled Lyapunov equations
\begin{equation}\label{eq:lyap_markov}
P^{\hat{K}}_i = Q_i + K_i^T R_i K_i + \left( A_i - B_i K_i \right)^T \EPKnext \left( A_i - B_i K_i \right)
\end{equation}
for all $i \in \Omega$.
Recall that we have $\EPKnext = \sum_{j = 1}^{n_s} p_{ij} P_j^{\hat K}$.

We will denote \(X_i(t) \coloneqq \Exp{x_t x_t^T \1{ \omega(t) = i}}\).
This matrix also satisfies the recurrence \cite{costa2006discrete}:
\begin{equation*}
X_j(t+1) = \sum_{i \in \Omega} p_{ij} (A_i - B_i K_i) X_i(t) (A_i - B_i K_i)^T
\end{equation*}
with \(X_i(0) = \rho_i \Expx{x_0 x_0^T}\) for all \(i \in \Omega\).

We also make the following technical assumptions.
\begin{myassump}
Along with the standard assumption that $Q_i$ and $R_i$ being positive definite for all $i \in \Omega$, we assume that $\rho_i > 0$ for all $i \in \Omega$ and  \(\Expx{x_0 x_0^T} \succ 0\).
This indicates that there is a chance of starting from any mode $i$. Moreover, the expected covariance of the initial state is full rank.
\end{myassump}

Now we are ready to present an explicit formula for the policy gradient \(\nabla C(\hat{K})\).

\begin{mylemma} \label{lemma:policy_grad}
Given $\hat{K}\in \mathcal{K}$, the gradient for the cost function~(\ref{eq:markov_cost}) with respect to policy \(\hat{K}\) is
\begin{equation}\label{eq:exact_grad}
\nabla C(\hat{K}) = 2\bmat{L_1(\hat{K}) & L_2(\hat{K}) & \cdots & L_{n_s}(\hat{K})} \chi_{\hat{K}}
\end{equation}
where 
\begin{equation}
\label{eq:Ldef}
L_i(\hat{K}) = \left(R_i + B^T_i \EPKnext B_i \right) K_i - B^T_i \EPKnext A_i
\end{equation}
and
\begin{equation}
\label{eq:chidef}
\chi_{\hat{K}} =  \textup{diag} \left( \sumtinf X_1(t), \ldots, \sumtinf X_{n_s}(t) \right).
\end{equation}
\end{mylemma}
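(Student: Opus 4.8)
The plan is to generalize the Lyapunov-differentiation argument of Fazel et al.\ (the LTI case reviewed in Section~\ref{sec:LQRreview}) to the coupled MJLS setting. The starting point is to rewrite the cost as a trace pairing: since $X_i(0) = \rho_i \Expx{x_0 x_0^T}$, equation~(\ref{eq:markov_cost}) gives $C(\hat K) = \sum_{i \in \Omega} \tr{P_i^{\hat K} X_i(0)}$. I would then compute the differential $dC = \sum_i \tr{dP_i^{\hat K}\, X_i(0)}$ by differentiating the coupled Lyapunov equations~(\ref{eq:lyap_markov}) with respect to $\hat K$, and finally trade the hard-to-compute quantity $dP_i^{\hat K}$ against $X_i(0)$ using an adjoint identity that produces exactly the accumulated correlation blocks $\sum_t X_i(t)$.

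The central computation is the differential of~(\ref{eq:lyap_markov}). Writing $A_{K,i} := A_i - B_i K_i$ and perturbing $K_i \to K_i + dK_i$, a direct expansion of~(\ref{eq:lyap_markov}), using $dA_{K,i} = -B_i\, dK_i$ and $\mathcal{E}_i(dP) = \sum_j p_{ij}\, dP_j$, collapses all the terms not involving $dP$ into $dK_i^T L_i(\hat K) + L_i(\hat K)^T dK_i$, where $L_i(\hat K)$ is exactly the matrix in~(\ref{eq:Ldef}); this is where the combination $(R_i + B_i^T \EPKnext B_i)K_i - B_i^T \EPKnext A_i$ appears naturally. The remaining term is $A_{K,i}^T \mathcal{E}_i(dP) A_{K,i}$, so $dP_i$ obeys the coupled recurrence $dP_i = M_i + A_{K,i}^T \mathcal{E}_i(dP) A_{K,i}$ with $M_i := dK_i^T L_i + L_i^T dK_i$. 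Introducing the linear operator $\mathcal{L}(\{S_i\})_i := A_{K,i}^T \bigl(\sum_j p_{ij} S_j\bigr) A_{K,i}$, this reads $dP = (I - \mathcal{L})^{-1} M$, where invertibility is guaranteed by mean-square stability of $\hat K \in \mathcal{K}$ (the spectral radius of $\mathcal{L}$ is below one, so the Neumann series $\sum_k \mathcal{L}^k$ converges).

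The key step, and the part I expect to require the most care, is to avoid solving this coupled equation explicitly. I would introduce the operator $\mathcal{F}(\{X_i\})_j := \sum_i p_{ij} A_{K,i} X_i A_{K,i}^T$ governing the $X_i(t)$ recurrence, so that $Y := \sum_{t=0}^\infty X(t) = (I - \mathcal{F})^{-1} X(0)$, and then verify that $\mathcal{F}$ is the adjoint of $\mathcal{L}$ under the pairing $\langle S, X\rangle := \sum_i \tr{S_i X_i}$. This adjoint identity is the crux: it is the MJLS analogue of the transpose relationship between the Lyapunov recursion for $P_K$ and the state-covariance recursion in the LTI case, but here the transition matrix $(p_{ij})$ couples the modes, so the adjoint exchanges not only the transpose of each $A_{K,i}$ but also the roles of the indices $i$ and $j$ in $p_{ij}$; checking this index bookkeeping carefully is the main obstacle. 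Granting it, I would write $dC = \langle dP, X(0)\rangle = \langle (I-\mathcal{L})^{-1}M, X(0)\rangle = \langle M, (I-\mathcal{F})^{-1}X(0)\rangle = \langle M, Y\rangle$.

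Finally, with $dC = \sum_i \tr{(dK_i^T L_i + L_i^T dK_i)\, Y_i}$ where $Y_i = \sum_t X_i(t)$ is symmetric, cyclic permutation of the trace reduces each summand to $2\langle L_i Y_i, dK_i\rangle$, so $\nabla_{K_i} C(\hat K) = 2 L_i(\hat K) Y_i$. Stacking these blocks and using that $\chi_{\hat K}$ in~(\ref{eq:chidef}) is block diagonal with blocks $Y_i$ yields $\nabla C(\hat K) = 2\,[L_1(\hat K)\ \cdots\ L_{n_s}(\hat K)]\,\chi_{\hat K}$, which is~(\ref{eq:exact_grad}). I would also remark that the convergence of $\sum_t X_i(t)$ (finiteness of $Y_i$) and the differentiability of $C$ used throughout are both consequences of $\hat K \in \mathcal{K}$, so the derivation remains well defined.
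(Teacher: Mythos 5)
Your proposal is correct and takes essentially the same route as the paper: both proofs differentiate the coupled Lyapunov equations~(\ref{eq:lyap_markov}) to obtain $dP_i^{\hat K} = dK_i^T L_i(\hat K) + L_i(\hat K)^T dK_i + \phi_i^T \mathcal{E}_i(dP^{\hat K}) \phi_i$ with $\phi_i = A_i - B_i K_i$, and then pair this with the $\rho_i$-weighted initial covariance to extract the gradient. The only difference is presentational: where the paper resolves the resulting recursion by iterating the trace identity term by term (``iterate to get the desired result''), you formalize that same unrolling as an adjoint/Neumann-series identity between the operators $\mathcal{L}$ and $\mathcal{F}$ --- a valid, and arguably cleaner, justification of the paper's iteration step.
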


\begin{proof}
The differentiability of $ C(\hat K)$  can be proved using the implicit function theorem, and this step is similar to the proof of Lemma 3.1 in \cite{rautert1997computational}. Now we derive the gradient formula by modifying the total derivative arguments in \cite{rautert1997computational, maartensson2009gradient}.
Start by denoting \( \phi_i = A_i - B_i K_i \). Then, we can take the total derivative of (\ref{eq:lyap_markov}) to show the following relation for each $i \in \Omega$ 
\begin{align*}
dP_i^{\hat{K}} &= dK_i^T L_i(\hat{K}) + L_i(\hat{K})^T dK_i + \phi_i^T \! \left( \sum_{j\in\Omega} p_{ij} dP_j^{\hat{K}}\! \right) \phi_i\\
	&= dK_i^T L_i(\hat{K}) + L_i(\hat{K})^T dK_i + \phi_i^T \dEPKnext \phi_i
\end{align*}
Hence, the total derivative of the cost \eqref{eq:markov_cost} is
\begin{align*}
dC(\hat{K}) &=\Expx{ x_0^T \left( \sum_{i\in\Omega } \rho_i \, dP_i^{\hat{K}} \right) x_0} \\
&= \tr{\left( \sum_{i\in\Omega } \rho_i \, dP_i^{\hat{K}} \right) \Expx{ x_0 x_0^T}}\\
	&= \text{tr}\left(2 \sum_{i\in\Omega} \left( \rho_i \left(dK_i^T L_i(\hat{K})\right) \Expx{x_0 x_0^T} \right)\right. \\
	&\qquad\quad \left. + \sum_{i\in\Omega} \left( \rho_i \dEPKnext \phi_i \Expx{x_0 x_0^T} \phi_i^T \right)\right)\\
	&= \tr{2 \sum_{i\in\Omega}  dK_i^T L_i(\hat{K})  \left( \sumtinf X_i(t) \right) }.
\end{align*}
Recall from \cite{rautert1997computational} that $dC(K) = \tr{d\hat{K}^T\nabla C(\hat{K}) }$. This leads to the desired result.
\end{proof}

\textbf{Optimization Landscape for MJLS.}
LTI systems are just a special case of MJLS.
Since  policy optimization for quadratic control of LTI systems is non-convex, the same is true for the MJLS case.
However, from our gradient formula in Lemma \ref{lemma:policy_grad}, we can see that as long as \(\Expx{x_0 x_0^T}\) is 
full rank and $\rho_i>0$ for all $i$, it is necessary that a stationary point given by $\nabla C(\hat{K})=0$   satisfies
\begin{equation*}
L_i(\hat{K}) = \left(R_i + B^T_i \EPKnext B_i \right) K_i - B^T_i \EPKnext A_i=0.
\end{equation*}
Substituting the above equation into the coupled Lyapunov equation \eqref{eq:lyap_markov} leads to the global solution $\hat{K}^*$ defined by the coupled Algebraic Riccati Equations~(\ref{eq:markov_riccati}).
Therefore, the only stationary point is the global optimal solution. Overall, the optimization landscape for the MJLS case is quite similar to the classic LQR case if we allow the initial mode to be sufficiently random, i.e. $\rho_i>0$ for all $i$. Based on such similarity, it is reasonable to expect that the local search procedures (e.g. policy gradient) will be able to find the unique global minimum $\hat{K}^*$  for MJLS despite the non-convex nature of the problem. Compared with the LTI case, the characterization of $\mathcal{K}$ is more complicated for MJLS. Hence the main technical issue is how to show gradient-based methods can handle the feasibility constraint $\hat{K}\in \mathcal{K}$ without using projection. We will use a Lyapunov argument to tackle this issue.

\section{Main Convergence Results}

As reviewed in Section \ref{sec:LQRreview}, the  natural policy gradient method for the LTI case iterates as $K'\leftarrow K-\eta \nabla C(K) \Sigma_K^{-1}$.
For the MJLS case, the natural policy gradient method adopts a similar update rule and iterates as
\begin{align}
\label{eq:npgd}
\hat{K}^{n+1}= \hat{K}^n-\eta \nabla C(\hat{K}^n) \chi_{\hat{K}^n}^{-1}.
\end{align}
The  initial policy is denoted as $\hat{K}^0$.
The Gauss-Newton method uses the following update rule:
\begin{equation}
\label{eq:gngd}
\hat{K}^{n+1}= \hat{K}^n-2\eta \bmat{\psi_1^n L_1(\hat{K}^n), \ldots,\psi_{n_s}^n L_{n_s}(\hat{K}^n)} 
\end{equation}
where  $\psi_i^n \coloneqq (R_i + B_i^T \mathcal{E}_i(P^{\hat{K}^n}) B_i)^{-1}$.
In this section, we focus on the convergence guarantees of \eqref{eq:npgd} and \eqref{eq:gngd}, and show that both converge to the global optimal solution $\hat{K}^*$ at a linear rate if they are initialized at a policy in $\mathcal{K}$. 

To state the main convergence result, it is helpful to denote \(\hat{R} = \diag{R_1, \ldots, R_{n_s}}\), \(\hat{B} = \diag{B_1 , \ldots , B_{n_s}}\), and \(\EPhatnext=\diag{\mathcal{E}_1(P^{\hat{K}}),\ldots,\mathcal{E}_{n_s}(P^{\hat{K}})}\).
We also denote $\mu \coloneqq \min_{i\in \Omega}(\rho_i)\, \sigma_{\min}\!\left( \Expx{x_0 x_0^T} \right)$.

\begin{mythm}\label{thm:conv}
Suppose \(\hat{K}^0\in \mathcal{K}\) s.t. \( C(\hat{K}^0) \) is finite.
\begin{itemize}
    \item Gauss-Newton case: For any stepsize $\eta \le \frac{1}{2}$,
the Gauss-Newton method (\ref{eq:gngd}) converges to the global minimum $\hat{K}^*\in \mathcal{K}$ linearly as follows
\begin{align}
\label{eq:mainConGN}
C(\hat{K}^n) - C(\hat{K}^*) &\le \left( 1 -  \frac{2\eta\mu}{\| \chi_{\hat{K}^*} \|} \right)^n\times \nonumber \\
& \qquad \quad \left(C(\hat{K}^0) - C(\hat{K}^*) \right).
\end{align}
    \item Natural policy gradient case: For any stepsize $\eta \le \frac{1}{2\left( \| \hat{R} \| + \frac{\| \hat{B} \|^2 C(\hat{K}^0)}{\mu} \right)}$,
the  natural policy gradient method (\ref{eq:npgd}) converges to the global minimum $\hat{K}^*\in \mathcal{K}$ linearly as follows
\begin{align}
\label{eq:mainCon}
C(\hat{K}^n) - C(\hat{K}^*) &\le \left( 1 - \frac{2\eta\mu  \sigma_{\min}(\hat{R})}{\| \chi_{\hat{K}^*} \|} \right)^n\times \nonumber \\
& \qquad \quad \left(C(\hat{K}^0) - C(\hat{K}^*) \right).
\end{align}
\end{itemize}

\end{mythm}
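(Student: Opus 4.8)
The plan is to follow the LTI template of \cite{pmlr-v80-fazel18a} --- an exact cost-difference identity, a gradient-domination bound, and a per-step decrease --- while replacing the spectral-radius stability argument by a Lyapunov certificate adapted to mean-square stability. Write $\tilde{R}_i := R_i + B_i^T\EPKnext B_i$ (so that $\psi_i^n=\tilde{R}_i^{-1}$ at the $n$-th iterate), and for a single update $\hat{K}\mapsto\hat{K}'$ set $\deltaKi := K_i'-K_i$, $\phi_i := A_i-B_iK_i$, and $\phi_i' := \phi_i-B_i\deltaKi$. Subtracting the coupled Lyapunov equations \eqref{eq:lyap_markov} for $\hat{K}'$ and $\hat{K}$ and telescoping the result along the closed-loop trajectory driven by $\hat{K}'$, together with the relation $B_i^T\EPKnext\phi_i = R_iK_i-L_i(\hat{K})$ that follows directly from \eqref{eq:Ldef}, I would obtain the cost-difference identity
\begin{equation*}
C(\hat{K}')-C(\hat{K})=\sum_{i\in\Omega}\tr{\Big(\sumtinf X_i'(t)\Big)\big(\deltaKi^T\tilde{R}_i\deltaKi+\deltaKi^T L_i(\hat{K})+L_i(\hat{K})^T\deltaKi\big)},
\end{equation*}
where $X_i'(t)$ is the state correlation of the closed loop under $\hat{K}'$.

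Completing the square, the inner matrix is minimized at $\deltaKi=-\tilde{R}_i^{-1}L_i(\hat{K})$ with value $-L_i(\hat{K})^T\tilde{R}_i^{-1}L_i(\hat{K})$. Taking $\hat{K}'=\hat{K}^*$ and bounding $\tr{(\sumtinf X_i^*(t))Y}\le\|\chi_{\hat{K}^*}\|\tr{Y}$ for $Y\succeq0$ (each $\sumtinf X_i^*(t)$ being a diagonal block of $\chi_{\hat{K}^*}$) yields the gradient-domination inequality $C(\hat{K})-C(\hat{K}^*)\le\|\chi_{\hat{K}^*}\|\sum_{i}\tr{L_i(\hat{K})^T\tilde{R}_i^{-1}L_i(\hat{K})}$. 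For the per-step decrease I would substitute the two updates into the cost-difference identity: the inner matrix collapses to $-4\eta(1-\eta)L_i^T\tilde{R}_i^{-1}L_i$ for Gauss--Newton ($\deltaKi=-2\eta\tilde{R}_i^{-1}L_i$) and to $-4\eta L_i^T(I-\eta\tilde{R}_i)L_i$ for the natural gradient ($\deltaKi=-2\eta L_i$). Using $\sumtinf X_i'(t)\succeq X_i'(0)=\rho_i\Expx{x_0x_0^T}\succeq\mu I$, each trace is bounded below by $\mu$ times the unweighted trace. For Gauss--Newton with $\eta\le\frac12$ this gives a decrease of at least $2\eta\mu\sum_i\tr{L_i^T\tilde{R}_i^{-1}L_i}$; for the natural gradient I would additionally invoke $I-\eta\tilde{R}_i\succeq\frac12 I\succeq\frac{\sigma_{\min}(\hat{R})}{2}\tilde{R}_i^{-1}$ (valid once $\eta\le\frac{1}{2\|\tilde{R}_i\|}$, since $\tilde{R}_i\succeq R_i$). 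Combining either decrease with gradient domination produces the stated contraction factors.

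The crux --- and the step I expect to be hardest --- is showing $\hat{K}'\in\mathcal{K}$, so that $C(\hat{K}')$, $X_i'(t)$, and the next gradient are well defined; here the spectral-radius monotonicity of the LTI proof has no MJLS analogue. My plan is to reuse the algebra above but with the incumbent solution $P_i=P_i^{\hat{K}}$ in place of its total derivative, giving
\begin{equation*}
P_i-(\phi_i')^T\EPKnext\phi_i'=Q_i+(K_i')^T R_iK_i'-L_i^T\deltaKi-\deltaKi^T L_i-\deltaKi^T\tilde{R}_i\deltaKi.
\end{equation*}
Inserting the Gauss--Newton step makes the right-hand side equal $Q_i+(K_i')^TR_iK_i'+4\eta(1-\eta)L_i^T\tilde{R}_i^{-1}L_i\succeq Q_i\succ0$ for $\eta\le\frac12$; inserting the natural-gradient step gives $Q_i+(K_i')^TR_iK_i'+4\eta L_i^T(I-\eta\tilde{R}_i)L_i\succeq Q_i\succ0$ under the prescribed step size. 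Thus the positive definite family $\{P_i^{\hat{K}}\}$ satisfies the strict coupled Lyapunov inequality $P_i\succ(\phi_i')^T\Enext{P}\phi_i'$, which by the standard mean-square stability characterization for MJLS \cite{costa2006discrete} certifies $\hat{K}'\in\mathcal{K}$. The key idea is that the current value matrices serve as a stability certificate for the successor policy.

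Finally I would assemble the result by induction on $n$. The Lyapunov certificate keeps each iterate in $\mathcal{K}$, and the per-step decrease gives $C(\hat{K}^{n+1})\le C(\hat{K}^n)\le C(\hat{K}^0)$. For the natural gradient this monotonicity is exactly what validates the fixed step size: from $C(\hat{K})=\tr{(\sum_i\rho_iP_i^{\hat{K}})\Expx{x_0x_0^T}}\ge\mu\sum_i\tr{P_i^{\hat{K}}}$ one gets $\max_i\|P_i^{\hat{K}^n}\|\le C(\hat{K}^0)/\mu$, hence $\|\tilde{R}_i\|\le\|\hat{R}\|+\|\hat{B}\|^2C(\hat{K}^0)/\mu$, so the prescribed $\eta$ satisfies $\eta\le\frac{1}{2\|\tilde{R}_i\|}$ at every iterate. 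Unrolling the one-step contraction over $n$ steps then yields \eqref{eq:mainConGN} and \eqref{eq:mainCon}, with the limit being the unique stationary point $\hat{K}^*\in\mathcal{K}$ identified in Section~\ref{sec:pg}.
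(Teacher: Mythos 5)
Your proposal is correct and follows essentially the same route as the paper: the cost-difference (``almost smoothness'') identity, gradient domination via completing the square at $\hat{K}^*$, a one-step contraction, the Lyapunov certificate $P^{\hat{K}}$ establishing mean-square stability of $\hat{K}'$ through the strict coupled Lyapunov inequality, and induction with the cost-monotonicity argument validating the fixed natural-gradient step size. The only differences are cosmetic --- your sign convention for $\Delta K_i$, grouping the cross and quadratic terms together in the stability identity (yielding the factor $4\eta(1-\eta)$ rather than the paper's $2\eta(1-2\eta)$), and a slightly reordered chain of inequalities in the natural-gradient decrease bound.
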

\begin{myprf}
We briefly outline the main  proof  steps for the Gauss-Newton case. The proof for the natural policy gradient case is similar. The detailed proofs are presented in the appendix.
\begin{enumerate}
    \item Show that the one-step progress of the Gauss-Newton gives a policy stabilizing the closed-loop dynamics and yielding a finite cost.
    \item Apply the so-called ``almost smoothness" condition to show that the cost associated by the one-step progress of the Gauss-Newton  method decreases as follows:
\begin{align*}
C(\hat{K}^{n+1}) - C(\hat{K}^*) &\le \left( 1 -  \frac{2\eta\mu}{\| \chi_{\hat{K}^*} \|} \right) \times \\
& \qquad \qquad \left(C(\hat{K}^n) - C(\hat{K}^*) \right).
\end{align*}
    \item Use induction to show the final convergence result.
\end{enumerate}
 It is worth noting that the proof steps for the MJLS and LTI cases are quite similar. We can simply modify the proof arguments for the LTI case in \cite{pmlr-v80-fazel18a} to finish the second and third steps. The main challenge for the MJLS case is how to handle the first step, since one cannot directly modify the spectral radius argument in \cite{pmlr-v80-fazel18a} due to the stochastic nature of MJLS. We develop a novel Lyapunov argument to address this issue. We will only present the details for the first step here since that is the only part requiring new proving techniques. Other steps of the proof for both cases are deferred to the appendix.
\end{myprf}

\textbf{How do the policy optimization methods ensure the finite cost along the iteration path?} We need to show that for every \(\hat{K}\), we can choose a step size \(\eta\) such that the new controller  obtained in one-step update of the Gauss-Newton method or the natural policy gradient method (which is denoted as $\hat{K}'$) will also be stabilizing the closed-loop dynamics in the mean-square sense.
This lemma is of new technical novelty compared with the argument for the LTI case in \cite{pmlr-v80-fazel18a}. 
Notice that the ``almost smoothness" condition is required in the second step of the proof outline, as it gives a useful upper bound for $C(\hat{K}')$ in terms of $C(\hat{K})$.
However, to apply such a condition, one needs to ensure that both \(\hat{K}'\) and \(\hat{K}\) are stabilizing controllers such that $C(\hat{K}')$ and $C(\hat K)$ are both finite in the first place. 
Hence, one has to prove that the iterate $\hat{K}'$ never wanders into the region of instability before applying the ``almost smoothness" condition.

To show that every controller computed by the Gauss-Newton method or the natural policy gradient method  is stabilizing, we propose the following Lyapunov argument.
The main idea is that the value function at the current step serves naturally as a Lyapunov function for the next move due to the positive definiteness of \(Q_i\). 
The positive definiteness of \(Q_i\) guarantees that there is a stability margin around every point along the optimization trajectory.
The result for the Gauss-Newton case is formally stated below.

\begin{mylemma}\label{lemma:step_stable_GN} 
Suppose \(\hat{K}\) stabilizes the MJLS \eqref{eq:ltv} in the mean square sense.  
Then the one-step update $\hat{K}'$ obtained from the Gauss-Newton method~\eqref{eq:gngd} will also be stabilizing if the step size \(\eta\) satisfies $\eta \leq \frac{1}{2}$.
\end{mylemma}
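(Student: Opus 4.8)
The plan is to use the incumbent value function $P^{\hat K}=\{P_i^{\hat K}\}_{i\in\Omega}$ itself as a Lyapunov certificate for the closed loop produced by the updated gain $\hat K'$, thereby sidestepping the spectral-radius perturbation argument of \cite{pmlr-v80-fazel18a}, which has no MJLS analogue. First I would recall the coupled-Lyapunov characterization of mean square stability \cite{costa2006discrete}: the closed loop $x_{t+1}=(A_{\omega(t)}-B_{\omega(t)}K'_{\omega(t)})x_t$ is mean square stable if and only if there exist matrices $S_i\succ 0$ with $S_i-(A_i-B_iK_i')^T\Enext{S}(A_i-B_iK_i')\succ 0$ for every $i\in\Omega$. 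The whole proof then reduces to verifying this strict inequality for the specific choice $S_i=P_i^{\hat K}$. This is a legitimate candidate: since $\hat K\in\mathcal K$ and $K_i^T R_i K_i\succeq0$, $\phi_i^T\EPKnext\phi_i\succeq0$, the solution of \eqref{eq:lyap_markov} obeys $P_i^{\hat K}\succeq Q_i\succ 0$.

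To evaluate this candidate I would complete the square in \eqref{eq:lyap_markov}. Writing $\tilde R_i:=R_i+B_i^T\EPKnext B_i$, the one-step greedy gain $K_i^\sharp:=\tilde R_i^{-1}B_i^T\EPKnext A_i$, and the Riccati remainder $\mathcal{A}_i:=A_i^T\EPKnext A_i-A_i^T\EPKnext B_i\tilde R_i^{-1}B_i^T\EPKnext A_i$, a standard completion of squares recasts \eqref{eq:lyap_markov} as
\begin{equation*}
P_i^{\hat K}=Q_i+\mathcal{A}_i+(K_i-K_i^\sharp)^T\tilde R_i(K_i-K_i^\sharp),
\end{equation*}
and the same identity applied with $\phi_i':=A_i-B_iK_i'$ in place of $\phi_i$ yields $(\phi_i')^{T}\EPKnext\phi_i'+(K_i')^TR_iK_i'=\mathcal{A}_i+(K_i'-K_i^\sharp)^T\tilde R_i(K_i'-K_i^\sharp)$. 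The decisive observation is that, because $\psi_iL_i(\hat K)=K_i-K_i^\sharp$, the Gauss-Newton update \eqref{eq:gngd} reads $K_i'=(1-2\eta)K_i+2\eta K_i^\sharp$, so that $K_i'-K_i^\sharp=(1-2\eta)(K_i-K_i^\sharp)$.

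Subtracting the two square-completed expressions, the terms $Q_i$ and $\mathcal{A}_i$ cancel, and discarding the nonnegative contribution $(K_i')^TR_iK_i'$ leaves
\begin{equation*}
\begin{split}
P_i^{\hat K}-(\phi_i')^{T}\EPKnext\phi_i' \succeq {}& Q_i \\
& {}+\bigl[1-(1-2\eta)^2\bigr](K_i-K_i^\sharp)^T\tilde R_i(K_i-K_i^\sharp).
\end{split}
\end{equation*}
For any step size $0\le\eta\le\tfrac12$ one has $(1-2\eta)^2\le 1$, so the bracketed scalar is nonnegative; combined with $\tilde R_i\succ 0$ this makes the whole right-hand side $\succeq Q_i\succ 0$. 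This is exactly the strict coupled-Lyapunov inequality demanded in the first step, so $P^{\hat K}$ certifies that $\hat K'$ stabilizes the MJLS in the mean square sense, as claimed.

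The main obstacle is the very first step: choosing the correct stochastic certificate. Because mean square stability of an MJLS is governed by a coupled family of matrix inequalities rather than by the spectrum of a single closed-loop matrix, the eigenvalue-continuity reasoning of \cite{pmlr-v80-fazel18a} is unavailable, and one must instead exhibit positive definite $\{S_i\}$ solving the coupled inequality. The insight that the current cost matrices $P_i^{\hat K}$ already serve this role --- with a strictly positive stability margin $Q_i$ supplied for free by the standing assumption $Q_i\succ0$ --- is the crux of the argument; once it is in place, the rest is the routine completion of squares together with the elementary scalar bound $(1-2\eta)^2\le1$.
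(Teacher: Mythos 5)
Your proof is correct, and it rests on the same central idea as the paper's: invoke the coupled-Lyapunov LMI characterization of mean square stability from \cite{costa1993stability}, and certify it for $\hat{K}'$ with the incumbent value matrices $Y_i = P_i^{\hat{K}}$, whose strict margin is supplied by $Q_i \succ 0$. Where you genuinely depart is in the verification algebra, and the difference is worth noting. The paper substitutes $K_i = K_i' + \deltaKi$ into the Lyapunov equation for $P_i^{\hat{K}}$, splits the result into $-\left(Q_i + (K_i')^T R_i K_i'\right)$, a quadratic term in $\deltaKi$, and two cross terms, and then uses the Gauss--Newton form of $\deltaKi$ to evaluate the cross terms as $-4\eta(1-2\eta)\,L_i(\hat{K})^T\left(R_i + B_i^T \EPKnext B_i\right)^{-1} L_i(\hat{K})$, which is negative semidefinite precisely when $\eta \le \tfrac{1}{2}$. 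You instead complete the square around the greedy gain $K_i^\sharp$, which exposes the structure of the update: Gauss--Newton is exactly the interpolation $K_i' = (1-2\eta)K_i + 2\eta K_i^\sharp$, so the excess term $(K_i - K_i^\sharp)^T \tilde{R}_i (K_i - K_i^\sharp)$ contracts by the factor $(1-2\eta)^2$. The two computations are reconciled by the identities $L_i(\hat{K}) = \left(R_i + B_i^T \EPKnext B_i\right)(K_i - K_i^\sharp)$ and $1-(1-2\eta)^2 = 4\eta(1-\eta)$, so in aggregate they produce the same remainder; but your organization buys two concrete things. First, it makes transparent that $\eta = \tfrac{1}{2}$ is the exact policy-iteration/Newton step $K_i' = K_i^\sharp$. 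Second, since you only need $(1-2\eta)^2 \le 1$, your argument actually certifies mean square stability for the wider range $0 \le \eta \le 1$, whereas the paper's term-by-term sign argument, as written, covers only $\eta \le \tfrac{1}{2}$ (which is all the lemma claims, so both suffice).
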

\begin{proof}
Recall from \cite{costa1993stability} that the controller $\hat{K}'$ stabilizes \eqref{eq:ltv} in the mean-square sense if and only if there exists matrices \( \{Y_i\} \succ 0 \) such that
\begin{equation}
\label{eq:LMI}
(A_i - B_i K_i')^T \left[ \sum_{j \in \Omega} p_{ij} Y_j \right] (A_i - B_i K_i') - Y_i \prec 0, \quad \forall i \in \Omega
\end{equation}
We will show that the above condition can be satisfied by setting $Y_i=P_i$ where $P_i$ solves the MJLS Lyapunov equation $(A_i - B_i K_i)^T \EPKnext (A_i - B_i K_i) + Q_i + K_i^T R_i K_i = P_i$. Notice the existence of $P_i$ is guaranteed by the assumption $\hat{K}\in \mathcal{K}$.
Denote $\deltaK_i:= K_i-K_i'$. The Lyapunov equation for $P_i$ can be rewritten as $(A_i\! -\! B_i K_i'\! -\! B_i \deltaKi )^T \EPKnext (A_i\! -\! B_i K_i'\! -\! B_i \deltaKi)+ Q_i + (K_i' + \deltaKi)^T R_i (K_i' + \deltaKi) = P_i $. From this, we can directly obtain 
\begin{align*}
& (A_i - B_i K_i')^T \EPKnext (A_i - B_i K_i') - P_i = \\
&\qquad \qquad -\left( Q_i + (K_i')^T R_i K_i'\right) \\
&\qquad \qquad - \left(\deltaKi^T R_i \deltaKi + \deltaKi^TB_i^T \EPKnext B_i \deltaKi \right) \\
&\qquad \qquad - \deltaKi^T \left( R_i K_i' - B_i^T \EPKnext (A_i - B_i K_i') \right) \\
&\qquad \qquad - \left( R_i K_i' - B_i^T \EPKnext (A_i - B_i K_i') \right)^T \deltaKi
\end{align*}
Since \(R_i, \EPKnext, Q_i\) are all positive definite, the sum of the first two terms on the right hand side is negative definite.
We only need the last two terms to be negative semidefinite.
Note that $\deltaKi = 2 \eta (R_i + B_i^T \EPKnext B_i)^{-1} L_i(\hat{K})$. We have
\begin{align*}
&\deltaKi^T \left( R_i K_i' - B_i^T \EPKnext (A_i - B_i K_i') \right) \\
&\quad = \deltaKi^T\! \left( \left(\!R_i +  B_i^T \EPKnext B_i\right)\! K_i' - B_i^T \EPKnext A_i \right) \\
	&\quad= \deltaKi^T\! \left( - \left(\!R_i +  B_i^T \EPKnext B_i\right)\! \Delta K_i + L_i(\hat{K}) \right) \\
	&\quad= 2 \eta(1 - 2\eta) L_i(\hat{K})^T\left(\!R_i +  B_i^T \EPKnext B_i\right)^{-1}  L_i(\hat{K})
\end{align*}
which is positive semidefinite under the condition $\eta\le \frac{1}{2}$.
\end{proof}

For the natural gradient method, we have a similar result.
\begin{mylemma}\label{lemma:step_stable} 
Suppose \(\hat{K}\) stabilizes the MJLS \eqref{eq:ltv} in the mean square sense.  
Then the one-step update $\hat{K}'$ obtained from the natural policy gradient method~\eqref{eq:npgd} will also be stabilizing if  \(\eta\) satisfies 
\begin{equation*}
 \eta \leq \frac{1}{2 \| \hat{R} + \hat{B}^T \EPhatnext \hat{B} \|}.
\end{equation*}
\end{mylemma}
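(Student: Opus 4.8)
The plan is to follow the Lyapunov argument of Lemma~\ref{lemma:step_stable_GN} almost verbatim, since the only difference between the natural policy gradient and Gauss-Newton updates is the shape of the step. First I would invoke the mean-square stability characterization of \cite{costa1993stability}: it suffices to produce matrices $\{Y_i\}\succ 0$ satisfying the LMI \eqref{eq:LMI} for the updated gains $K_i'$, and I would again take $Y_i=P_i$, where $P_i$ solves the coupled Lyapunov equation $(A_i - B_i K_i)^T \EPKnext (A_i - B_i K_i) + Q_i + K_i^T R_i K_i = P_i$. Existence of $P_i$ follows from $\hat{K}\in\mathcal{K}$.

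The key preliminary observation is that right-multiplying the gradient \eqref{eq:exact_grad} by $\chi_{\hat{K}}^{-1}$ cancels the trailing factor $\chi_{\hat{K}}$, so the natural policy gradient update \eqref{eq:npgd} acts blockwise as $K_i' = K_i - 2\eta L_i(\hat{K})$, i.e.\ $\deltaKi = 2\eta L_i(\hat{K})$. This replaces the preconditioned step $\deltaKi = 2\eta(R_i + B_i^T \EPKnext B_i)^{-1}L_i(\hat{K})$ of the Gauss-Newton case; everything else in the algebraic decomposition of $(A_i - B_i K_i')^T \EPKnext (A_i - B_i K_i') - P_i$ carries over unchanged. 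In particular, the grouped terms $-(Q_i + (K_i')^T R_i K_i')$ and $-(\deltaKi^T R_i \deltaKi + \deltaKi^T B_i^T \EPKnext B_i \deltaKi)$ remain negative definite irrespective of how $\deltaKi$ was generated, so it again suffices to certify that the two cross terms are negative semidefinite.

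The heart of the argument is then the cross-term identity. Using $\deltaKi = 2\eta L_i(\hat{K})$ together with the relation $(R_i + B_i^T \EPKnext B_i)K_i' - B_i^T \EPKnext A_i = L_i(\hat{K}) - (R_i + B_i^T \EPKnext B_i)\deltaKi$, I would show
\begin{align*}
\deltaKi^T\left( R_i K_i' - B_i^T \EPKnext (A_i - B_i K_i') \right) = 2\eta\, L_i(\hat{K})^T \left( I - 2\eta\left( R_i + B_i^T \EPKnext B_i \right) \right) L_i(\hat{K}),
\end{align*}
which is positive semidefinite as soon as $2\eta\left( R_i + B_i^T \EPKnext B_i \right) \preceq I$ for every mode $i$. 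Since $\hat{R} + \hat{B}^T \EPhatnext \hat{B}$ is block diagonal with blocks $R_i + B_i^T \EPKnext B_i$, its spectral norm equals $\max_i \| R_i + B_i^T \EPKnext B_i \|$, so the single bound $\eta \le \frac{1}{2\| \hat{R} + \hat{B}^T \EPhatnext \hat{B} \|}$ enforces the per-mode condition simultaneously and recovers exactly the stated step size.

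I do not anticipate a genuine obstacle beyond bookkeeping, since the Lyapunov framework was already established for Lemma~\ref{lemma:step_stable_GN}. The one step demanding care is the cross-term algebra: correctly tracking the matrix factor $I - 2\eta(R_i + B_i^T \EPKnext B_i)$ (rather than the scalar $1-2\eta$ that arises in the Gauss-Newton case) and then translating its per-mode positive semidefiniteness into a single spectral-norm condition via the block-diagonal structure of $\hat{R} + \hat{B}^T \EPhatnext \hat{B}$.
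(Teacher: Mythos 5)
Your proposal is correct and follows essentially the same route as the paper's own proof: the identical Lyapunov certificate $Y_i = P_i^{\hat K}$, the observation that $\chi_{\hat K}^{-1}$ cancels the trailing factor of the gradient so that $\deltaKi = 2\eta L_i(\hat K)$, the same cross-term identity yielding the factor $I - 2\eta\bigl(R_i + B_i^T \EPKnext B_i\bigr)$, and the same passage from the per-mode norm bound to the single step-size condition via the block-diagonal structure of $\hat{R} + \hat{B}^T \EPhatnext \hat{B}$. No gaps; the argument matches the paper in both structure and detail.
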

\begin{proof}
The proof starts with the same steps as the proof of Lemma~\ref{lemma:step_stable_GN}. We will show that the condition \eqref{eq:LMI} can be met by setting $Y_i=P_i$ where $P_i$ solves the MJLS Lyapunov equation associated with the controller $\hat{K}$.
For the natural policy gradient method, we have \(\deltaK \coloneqq \eta \nabla C(\hat{K}) \chi_{\hat{K}}^{-1}\) and \(\deltaKi = 2\eta L_i(\hat{K}) \).
To show that the last two terms are negative semidefinite, we make the following calculations:
\begin{align*}
&\deltaKi^T \left( R_i K_i' - B_i^T \EPKnext (A_i - B_i K_i') \right) \\
=& \deltaKi^T\! \left( (\!R_i +  B_i^T \EPKnext B_i)\! K_i' - B_i^T \EPKnext A_i \right) \\
= &\deltaKi^T\! \left( (\!R_i +  B_i^T \EPKnext B_i)\! \left(K_i - \deltaKi \right)  - B_i^T \EPKnext A_i \right) \\
= &2\eta L_i(\hat{K})\! \left(\! L_i(\hat{K}) -\! 2\eta\!  \left(\!R_i +  B_i^T \EPKnext B_i\!\right)\! L_i(\hat{K})\! \right) \\
=& 2\eta L_i(\hat{K})\! \left(I - 2\eta \left(R_i +  B_i^T \EPKnext B_i\right) \right)  L_i(\hat{K})
\end{align*}
Clearly, the above term is guaranteed to be positive semidefinite if $\eta$ satisfies
\begin{align*}
 \eta \leq \frac{1}{2\| R_i + B_i^T \EPKnext B_i \|}.
\end{align*}
Lastly, notice
$\| R_i + B_i^T \EPKnext B_i \| \leq \| \hat{R} + \hat{B}^T \EPhatnext \hat{B} \|$ for all $i$.
This leads to the desired conclusion.
\end{proof}
From the above proof, we can clearly see that $P^{\hat K}$ can be used to construct a Lyapunov function for $K'$ if $\eta$ satisfies a bound. This leads to a novel proof for the stability along the natural policy gradient iteration path. This idea can even be extended beyond the linear quadratic control case. Very recently, a similar idea is used to show the convergence properties of policy optimization methods for the  mixed $\mathcal{H}_2/\mathcal{H}_\infty$ control problem where the cost function may not blow up to infinity on the boundary of the feasible set \cite{zhang2019policyc}.

\section{Simulation Results}
Consider a system with 100 states, 20 inputs, and 100 modes.
The system matrices \(A\) and \(B\) were generated using \texttt{drss} in MATLAB in order to guarantee that the system would have finite cost with \(\hat{K}^0 = 0\).
The probability transition matrix \(\mathcal{P}\) was sampled from a Dirichlet Process \( \text{Dir}(99\cdot I_{100} + 1) \).
We also assumed that we had equal probability of starting in any initial mode.
For simplicity we set \( Q_i = I \) and \( R_i = I \) for all \( i \in \Omega \).

\begin{figure}[h!]
\centering
\includegraphics[width=0.485\textwidth]{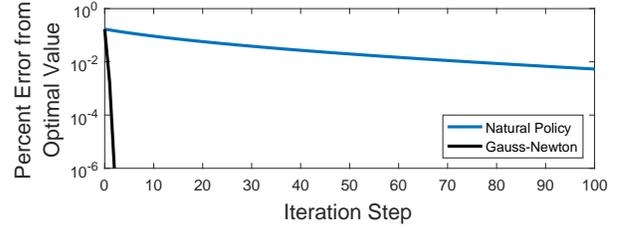}
\caption{Percent error from optimal cost for controllers computed using both policy optimization methods. The Gauss-Newton method converges faster.}
\label{fig:exact_conv}
\end{figure}

In Figure~\ref{fig:exact_conv} we can see that both policy optimization methods converge to the optimal solution.
As expected, Gauss-Newton converges much faster than the natural policy gradient method.
The step size of the natural policy gradient method depends on various system parameters, and requires some tuning efforts for each different problem instance.

\section{Conclusion}
In this paper we have studied policy optimization for the quadratic control of Markovian jump linear systems.
We developed an exact formula for computing the gradient of the cost with respect to a given policy, and presented convergence guarantees for the Gauss-Newton method and the natural policy gradient method.
The results include a novel Lyapunov argument to prove the stability of the iterations along the optimization trajectories.

The results obtained further suggest that one could use model-free methods, such as zeroth-order optimization or the REINFORCE algorithm, to learn the optimal control from data.
Such model-free techniques will allow us  to learn the control of unknown MJLS without dealing with system identification. 
This would be particularly useful for large scale systems, where the computational complexity grows as the system size increases.
We will work on such extensions in the future.

\bibliographystyle{IEEEtran}
\bibliography{ACC2020}

\section*{APPENDIX}
This appendix provides the detailed proofs of the convergence rate results presented in this paper.
We will first start by proving a few helper lemmas. Then we will provide upper bounds for the cost associated with the one-step progress.
Lastly we will show that the both algorithms converge to the optimal policy. Most steps mimic their LTI counterparts.

\begin{mylemma}[``Almost smoothness'']
\label{lemma:smooth}
Suppose $\hat{K}\in \mathcal{K}$ and $\hat{K}'\in \mathcal{K}$.
The cost function \(C(\hat{K})\) defined in (\ref{eq:switched_cost}) satisfies
\begin{align*}
C(\hat{K}') - C(\hat{K}) &= -2\tr{\chi_{\hat{K}'} \Delta \hat{K}^T \hat{L}_{\hat{K}}} \\
&\quad+ \tr{\chi_{\hat{K}'} \Delta \hat{K}^T\! \left(\hat{R}\! +\! \hat{B}^T \EPhatnext \hat{B}\right)\!\Delta \hat{K}}
\end{align*}
\begin{align*}
\text{where }\Delta \hat{K} &= \diag{(K_1 - K_1'), \ldots, (K_{n_s} - K_{n_s}')},\\
\hat{L}_{\hat{K}} &= \diag{L_1(\hat{K}), \ldots, L_{n_s}(\hat{K})}, \\
\EPhatnext &= \diag{\mathcal{E}_1(P^{\hat{K}}), \ldots, \mathcal{E}_{n_s}(P^{\hat{K}})}.
\end{align*}
\end{mylemma}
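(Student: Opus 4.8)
The plan is to establish the identity through an exact cost-difference argument: run the closed-loop system under the \emph{new} controller $\hat K'$ while measuring its running cost against the value function of the \emph{old} controller $\hat K$. Concretely, I would introduce the mode-dependent value function $V(x,i) = x^T P_i^{\hat K} x$, where $P_i^{\hat K}$ solves the coupled Lyapunov equations \eqref{eq:lyap_markov} (well-defined since $\hat K \in \mathcal{K}$), and consider a trajectory $\{x_t\}$ generated by $u_t = -K'_{\omega(t)} x_t$ with $x_0 \sim \mathcal{D}$, $\omega(0)\sim\rho$. Because $\hat K' \in \mathcal{K}$ is mean-square stabilizing, the boundary term satisfies $\Exp{x_T^T P_{\omega(T)}^{\hat K} x_T} \to 0$ as $T\to\infty$, so the telescoping identity
\begin{equation*}
\Expxt{\sum_{t=0}^\infty \left( V(x_t, \omega(t)) - V(x_{t+1}, \omega(t+1)) \right)} = \Expxt{x_0^T P_{\omega(0)}^{\hat K} x_0} = C(\hat K)
\end{equation*}
holds, where the final equality is exactly \eqref{eq:markov_cost}.

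Next, I would write $C(\hat K')$ as the expected infinite-horizon running cost of that same $\hat K'$-trajectory and subtract the telescoping identity to obtain
\begin{equation*}
C(\hat K') - C(\hat K) = \Expxt{\sum_{t=0}^\infty \left( x_t^T Q_{\omega(t)} x_t + u_t^T R_{\omega(t)} u_t - V(x_t, \omega(t)) + V(x_{t+1}, \omega(t+1)) \right)}.
\end{equation*}
The crux is evaluating the per-stage term by conditioning on $(x_t, \omega(t)=i)$: the Markov jump structure yields $\Exp{x_{t+1}^T P_{\omega(t+1)}^{\hat K} x_{t+1} \mid \omega(t)=i} = x_t^T (A_i - B_i K_i')^T \EPKnext (A_i - B_i K_i') x_t$, so the conditional per-stage contribution equals $x_t^T M_i x_t$ with
\begin{equation*}
M_i = Q_i + (K_i')^T R_i K_i' - P_i^{\hat K} + (A_i - B_i K_i')^T \EPKnext (A_i - B_i K_i').
\end{equation*}
I would then substitute the Lyapunov equation for $P_i^{\hat K}$ to eliminate $Q_i - P_i^{\hat K}$, set $K_i' = K_i - \deltaKi$, and expand. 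Collecting terms, the definition of $L_i(\hat K)$ in \eqref{eq:Ldef} appears naturally in the linear-in-$\deltaKi$ part, and $M_i$ simplifies to $-\deltaKi^T L_i(\hat K) - L_i(\hat K)^T \deltaKi + \deltaKi^T (R_i + B_i^T \EPKnext B_i) \deltaKi$.

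Finally, I would take the full expectation and sum over $t$, using $\sum_t \Exp{x_t x_t^T \1{\omega(t)=i}} = \sum_t X_i(t)$, which is the $i$-th diagonal block of $\chi_{\hat K'}$ (the state statistics are those of the $\hat K'$-trajectory, explaining why $\chi_{\hat K'}$ rather than $\chi_{\hat K}$ enters). The cyclic property of the trace together with symmetry of $\sum_t X_i(t)$ merges the two linear terms into $-2\tr{\deltaKi^T L_i(\hat K) \sum_t X_i(t)}$, and reassembling the per-mode sums into block-diagonal form with $\deltaK$, $\hat L_{\hat K}$, $\hat R + \hat B^T \EPhatnext \hat B$, and $\chi_{\hat K'}$ gives the stated formula. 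The main obstacle is the first paragraph: rigorously justifying the telescoping sum and the interchange of expectation with the infinite sum, which relies on the mean-square stability of $\hat K'$ (guaranteed by $\hat K' \in \mathcal{K}$) to make the boundary term vanish and the series converge absolutely. The remaining algebra is routine but demands care with signs and with the conditional-expectation bookkeeping induced by the Markov jumps.
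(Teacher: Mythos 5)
Your proposal is correct, and the key algebra is the same as the paper's: your matrix $M_i = -\deltaKi^T L_i(\hat{K}) - L_i(\hat{K})^T \deltaKi + \deltaKi^T\left(R_i + B_i^T \EPKnext B_i\right)\deltaKi$ is exactly the bracketed expression the paper derives for the difference $P_i^{\hat{K}'} - P_i^{\hat{K}}$ after expanding the coupled Lyapunov equations. Where you diverge is in how the infinite-horizon bookkeeping is organized. The paper stays entirely at the matrix level: it writes $C(\hat{K}') - C(\hat{K}) = \sum_{i\in\Omega} \rho_i \tr{\left(P_i^{\hat{K}'} - P_i^{\hat{K}}\right)\Expx{x_0 x_0^T}}$, derives a recursion in which the difference reappears on the right-hand side sandwiched by the new closed-loop matrices $\phi_i' = A_i - B_i K_i'$, and then ``iterates'' --- unrolling that recursion so the $\Delta \hat{K}$ terms accumulate weights given by the covariance propagated under $\hat{K}'$, which is precisely how $\chi_{\hat{K}'}$ emerges. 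You instead work at the trajectory level, in the style of the cost-difference (advantage) lemma of Fazel et al.: telescope the value function of the old controller along the trajectory generated by the new one, condition on the Markov mode to evaluate each stage, and sum. The two are dual presentations of the same computation (propagating covariances forward versus value matrices backward, related by trace cyclicity), so neither is more general; but your version makes the analytic justification explicit and clean --- the boundary term $\Exp{x_T^T P_{\omega(T)}^{\hat{K}} x_T} \to 0$ and absolute convergence both follow from mean-square stability of $\hat{K}'$ --- whereas the paper's ``substitute and iterate'' leaves exactly that convergence argument implicit. Conversely, the paper's route avoids any probabilistic conditioning and reduces everything to finite-dimensional matrix identities plus one convergent series, which is slightly more economical on notation. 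One small point of care in your write-up: the conditional-expectation step uses both the Markov property of $\omega(t)$ and the fact that $x_{t+1}$ is a deterministic function of $(x_t,\omega(t))$ under the policy $\hat{K}'$; stating this explicitly would close the only remaining gap.
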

\begin{proof}
To simplify the equations, we use \( \phi_i = A_i - B_i K_i \) and \( \phi_i' \coloneqq A_i - B_i K_i' \). 
By definition, we have
\begin{align}
\label{eq:Cdiff}
\begin{split}
C(\hat{K}') - C(\hat{K})&= \sum_{i\in\Omega}  \Expx{x_0\left( \rho_i(P^{\hat{K}'}_i - P^{\hat{K}}_i)\right)x_0} \\
	&=\sum_{i\in\Omega} \Expx{\tr{\rho_i(P^{\hat{K}'}_i - P^{\hat{K}}_i) x_0 x_0^T}} 
\end{split}
\end{align}
Now we develop a formula for $(P_i^{\hat{K}'}\!\! - P_i^{\hat{K}})$.
Based on (\ref{eq:lyap_markov}), we have $
P^{\hat{K}'}_i = (\phi_i')^T \Enext{P^{\hat{K}'}} \phi_i' + Q_i + (K_i')^T R_i K_i'$.  Using this, we can directly show
\begin{align*}
P_i^{\hat{K}'}\!\! - P_i^{\hat{K}} &= (\phi_i')^T \Enext{P^{\hat{K}'}} \phi_i' + Q_i + (K_i')^T R_i K_i' - P_i^{\hat{K}} \\
 &= (\phi_i')^T \left( \Enext{P^{\hat{K}'}} - \EPKnext \right) (\phi_i')+ Q_i \\
&  \quad + (\phi_i')^T \EPKnext \phi_i' + (K_i')^T R_i K_i' - P_i^{\hat{K}}\\
 &= (\phi_i')^T \left( \Enext{P^{\hat{K}'}} - \EPKnext \right) \phi_i' \\
&  \quad + (K_i - K_i') (R_i + B_i^T \EPKnext B_i)(K_i - K_i')\\
&  \quad - (K_i - K_i')^T\left(R_i K_i - B_i^T \EPKnext \phi_i\right) \\
&  \quad - \left(R_i K_i - B_i^T \EPKnext \phi_i\right)^T(K_i - K_i') 
\end{align*}
Now we can substitute the above formula into \eqref{eq:Cdiff} and  iterate to get the desired result.
\end{proof}

Next, we show that \(C(\hat{K})\) is gradient dominated. 
Recall that we have $\mu = \min_{i\in \Omega}(\rho_i)\, \sigma_{\min}\!\left( \Expx{x_0 x_0^T} \right)$.
\begin{mylemma}
[Gradient Domination]
\label{lemma:grad_dom}
Suppose  \(\hat{K}\in\mathcal{K}\), and \(\mu > 0\). Let \(\hat{K}^*\) be the optimal policy.
Given the definitions in Lemma~\ref{lemma:policy_grad},  the following sequence of inequalities always holds:
\begin{align*}
C(\hat{K})\! - C(\hat{K}^*)\! &\leq  \| \chi_{\hat{K}^*} \| \tr{\hat{L}_{\hat{K}}^T (\hat{R} + \hat{B}^T\EPhatnext \hat{B})^{-1} \hat{L}_{\hat{K}}}\\
	&\leq\! \frac{\| \chi_{\hat{K}^*} \|}{\sigma_{\min}(\hat{R})} \tr{\hat{L}_{\hat{K}}^T \hat{L}_{\hat{K}}} \\
	&\leq\! \frac{\| \chi_{\hat{K}^*} \|}{\mu^2 \sigma_{\min}(\hat{R})} \tr{\nabla C(\hat{K})^T \nabla C(\hat{K})},
\end{align*}
\end{mylemma}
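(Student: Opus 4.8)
The plan is to prove the three inequalities in turn, with the first one carrying essentially all of the work and the last two reducing to routine L\"owner-order monotonicity of the trace. Throughout write $M := \hat{R} + \hat{B}^T \EPhatnext \hat{B}$, which is positive definite because $\hat{R}\succ 0$ and $\EPhatnext \succeq 0$. Note also that both $\hat{K}$ and $\hat{K}^*$ lie in $\mathcal{K}$, so Lemma~\ref{lemma:smooth} is applicable to the pair $(\hat{K},\hat{K}^*)$.

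\textbf{First inequality.} The key idea is to feed the optimal policy into the ``almost smoothness'' identity. Taking $\hat{K}'=\hat{K}^*$ in Lemma~\ref{lemma:smooth} (so $\Delta\hat{K}=\hat{K}-\hat{K}^*$ and the quadratic weight becomes $\chi_{\hat{K}^*}$) and rearranging gives the exact expression
\begin{align*}
C(\hat{K})-C(\hat{K}^*) = 2\tr{\chi_{\hat{K}^*}\Delta\hat{K}^T\hat{L}_{\hat{K}}} - \tr{\chi_{\hat{K}^*}\Delta\hat{K}^T M \Delta\hat{K}}.
\end{align*}
I would then complete the square in $\Delta\hat{K}$. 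Using the cyclic property of the trace to check $\tr{\chi_{\hat{K}^*}\Delta\hat{K}^T\hat{L}_{\hat{K}}}=\tr{\chi_{\hat{K}^*}\hat{L}_{\hat{K}}^T\Delta\hat{K}}$, one verifies the identity
\begin{align*}
&2\tr{\chi_{\hat{K}^*}\Delta\hat{K}^T\hat{L}_{\hat{K}}} - \tr{\chi_{\hat{K}^*}\Delta\hat{K}^T M\Delta\hat{K}} \\
&\quad = \tr{\chi_{\hat{K}^*}\hat{L}_{\hat{K}}^T M^{-1}\hat{L}_{\hat{K}}} \\
&\quad\quad - \tr{\chi_{\hat{K}^*}(\Delta\hat{K}-M^{-1}\hat{L}_{\hat{K}})^T M (\Delta\hat{K}-M^{-1}\hat{L}_{\hat{K}})}.
\end{align*}
Since $\chi_{\hat{K}^*}\succeq 0$ and $(\Delta\hat{K}-M^{-1}\hat{L}_{\hat{K}})^T M (\Delta\hat{K}-M^{-1}\hat{L}_{\hat{K}})\succeq 0$, the trace of their product is nonnegative and may be discarded, yielding $C(\hat{K})-C(\hat{K}^*)\le \tr{\chi_{\hat{K}^*}\hat{L}_{\hat{K}}^T M^{-1}\hat{L}_{\hat{K}}}$. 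Pulling out the spectral norm through $\tr{\chi_{\hat{K}^*}Y}\le\|\chi_{\hat{K}^*}\|\tr{Y}$ with $Y=\hat{L}_{\hat{K}}^T M^{-1}\hat{L}_{\hat{K}}\succeq 0$ gives the first claimed bound.

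\textbf{Second and third inequalities.} For the second, $\EPhatnext\succeq 0$ gives $M\succeq\hat{R}\succeq\sigma_{\min}(\hat{R})I$, hence $M^{-1}\preceq\sigma_{\min}(\hat{R})^{-1}I$ and $\hat{L}_{\hat{K}}^T M^{-1}\hat{L}_{\hat{K}}\preceq\sigma_{\min}(\hat{R})^{-1}\hat{L}_{\hat{K}}^T\hat{L}_{\hat{K}}$; taking traces (which are monotone under $\preceq$) produces the middle line. For the third, I would invoke Lemma~\ref{lemma:policy_grad}, namely $\nabla C(\hat{K})=2\bmat{L_1(\hat{K}) & \cdots & L_{n_s}(\hat{K})}\chi_{\hat{K}}$ with $\chi_{\hat{K}}=\diag{\Sigma_1,\ldots,\Sigma_{n_s}}$ and $\Sigma_i=\sumtinf X_i(t)$. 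The crucial observation is $\Sigma_i\succeq X_i(0)=\rho_i\Expx{x_0 x_0^T}\succeq\mu I$ for every $i$ (using $\rho_i\ge\min_j\rho_j$ and the definition of $\mu$). Expanding the Frobenius norm then gives $\tr{\nabla C(\hat{K})^T\nabla C(\hat{K})}=4\sum_i\tr{L_i(\hat{K})^T L_i(\hat{K})\Sigma_i^2}\ge 4\mu^2\sum_i\tr{L_i(\hat{K})^T L_i(\hat{K})}=4\mu^2\tr{\hat{L}_{\hat{K}}^T\hat{L}_{\hat{K}}}$, so $\tr{\hat{L}_{\hat{K}}^T\hat{L}_{\hat{K}}}\le\mu^{-2}\tr{\nabla C(\hat{K})^T\nabla C(\hat{K})}$ (in fact with room to spare, the sharp constant being $\tfrac{1}{4}\mu^{-2}$), which is the last line.

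\textbf{Main obstacle.} The only genuinely nontrivial step is the completion-of-squares argument for the first inequality: it must be carried out at the level of weighted traces of block matrices, and one has to respect the mismatched pairing that Lemma~\ref{lemma:smooth} produces, in which the linear and quadratic terms carry the \emph{optimal} weight $\chi_{\hat{K}^*}$ while $\hat{L}_{\hat{K}}$ and $M$ are evaluated at the \emph{current} $\hat{K}$. Nonnegativity of the discarded remainder hinges solely on $\chi_{\hat{K}^*}\succeq 0$, which holds since it is a block-diagonal sum of the positive semidefinite matrices $X_i(t)$; everything after that is monotonicity of the trace under the L\"owner order.
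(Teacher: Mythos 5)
Your proposal is correct and follows essentially the same route as the paper's proof: completing the square in the ``almost smoothness'' identity with $\hat{K}'=\hat{K}^*$, discarding the nonnegative squared term, pulling out $\|\chi_{\hat{K}^*}\|$ and $\sigma_{\min}(\hat{R})^{-1}$, and finally converting $\tr{\hat{L}_{\hat{K}}^T\hat{L}_{\hat{K}}}$ into a gradient norm via the formula $\nabla C(\hat{K})=2\bmat{L_1(\hat{K}) & \cdots & L_{n_s}(\hat{K})}\chi_{\hat{K}}$ together with $\chi_{\hat{K}}\succeq \mu I$. Your block-wise treatment of the last step is in fact slightly tighter than the paper's (which writes $\tr{\hat{L}_{\hat{K}}^T\hat{L}_{\hat{K}}}=\tr{\chi_{\hat{K}}^{-1}\nabla C(\hat{K})^T\nabla C(\hat{K})\chi_{\hat{K}}^{-1}}$, silently dropping the factor $\tfrac{1}{4}$ you correctly identify), but this only affects the constant and not the validity of the stated bound.
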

\begin{proof}
For readability, we denote $\hat{\Psi} \coloneqq \hat{R} + \hat{B}^T \EPhatnext \hat{B} $. From Lemma~\ref{lemma:smooth}, we can complete the squares to show
\begin{align*}
&2\tr{\chi_{\hat{K}'} (-\Delta \hat{K})^T \hat{L}_{\hat{K}}} + \tr{\chi_{K'} (-\Delta \hat{K})^T \hat{\Psi}(-\Delta \hat{K})} \\
&\   = \tr{\!\chi_{\hat{K}'} \left(\!(-\Delta \hat{K}) +\hat{\Psi}^{-1} \hat{L}_{\hat{K}}\right)^T\!\! \hat{\Psi} \left(\!(-\Delta \hat{K}) +\hat{\Psi}^{-1} \hat{L}_{\hat{K}}\right)\!\!}\\
&\  \qquad - \tr{\chi_{\hat{K}'} \hat{L}_{\hat{K}}^T (\hat{\Psi})^{-1} \hat{L}_{\hat{K}}}\\
&\  \geq  -\tr{\chi_{\hat{K}'} \hat{L}_{\hat{K}}^T (\hat{R} + \hat{B}^T\EPhatnext \hat{B})^{-1} \hat{L}_{\hat{K}}}
\end{align*}

Then, from Lemma~\ref{lemma:smooth} we have the following inequality
\begin{align*}
C(\hat{K})\! - C(\hat{K}^*)\! &\leq \tr{\chi_{K^*} \hat{L}_{\hat{K}}^T (\hat{R} + \hat{B}^T\EPhatnext \hat{B})^{-1} \hat{L}_K}\\
	&\leq \| \chi_{\hat{K}^*} \| \tr{\hat{L}_{\hat{K}}^T (\hat{R} + \hat{B}^T\EPhatnext \hat{B})^{-1} \hat{L}_{\hat{K}}}\\
&\leq \frac{\| \chi_{\hat{K}^*} \|}{\sigma_{\min}(\hat{R})} \tr{\hat{L}_{\hat{K}}^T \hat{L}_{\hat{K}}}\\
&= \frac{\| \chi_{\hat{K}^*} \|}{\sigma_{\min}(\hat{R})} \tr{\chi_{\hat{K}}^{-1} \nabla C(\hat{K})^T \nabla C(\hat{K}) \chi_{\hat{K}}^{-1}} \\
&\leq\! \frac{\| \chi_{\hat{K}^*} \|}{\sigma_{\min}(\hat{R}) \sigma_{\min}(\chi_{\hat{K}})^2} \tr{\!\nabla C(\hat{K})^T\! \nabla C(\hat{K})\!} \\
&\leq \frac{\| \chi_{\hat{K}^*} \|}{\mu^2 \sigma_{\min}(\hat{R})} \tr{\nabla C(\hat{K})^T \nabla C(\hat{K})}
\end{align*}
This completes the proof.
\end{proof}

The next lemma gives a useful lower bound on the cost.  
\begin{mylemma} \label{lemma:bound}
Given the definitions in  (\ref{eq:lyap_markov}), the following holds
\begin{equation*}
\sum_{i\in \Omega}\| P_i^{\hat{K}} \| \leq \frac{C(\hat{K})}{\mu}.
\end{equation*}
\end{mylemma}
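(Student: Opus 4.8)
The plan is to lower-bound $C(\hat K)$ directly by $\mu\sum_{i\in\Omega}\|P_i^{\hat K}\|$ using the positive definiteness of the solutions $P_i^{\hat K}$ together with two elementary trace inequalities, and then simply rearrange. First I would rewrite the cost as a trace. Starting from the expression \eqref{eq:markov_cost} and writing $\Sigma_0\coloneqq \Expx{x_0 x_0^T}$, the cyclic property of the trace gives
\begin{equation*}
C(\hat K) = \Expx{x_0^T\Big(\sum_{i\in\Omega}\rho_i P_i^{\hat K}\Big)x_0} = \sum_{i\in\Omega}\rho_i \,\tr{P_i^{\hat K}\,\Sigma_0}.
\end{equation*}
Since $\hat K\in\mathcal{K}$, the coupled Lyapunov equation \eqref{eq:lyap_markov} admits a solution, and because $Q_i\succ 0$ (and $R_i\succ 0$) each $P_i^{\hat K}$ is positive definite. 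This positive definiteness is what makes the subsequent bounds valid.

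Next I would apply two standard inequalities for positive semidefinite matrices. For the first, using a symmetric square root $P_i^{\hat K}=(P_i^{\hat K})^{1/2}(P_i^{\hat K})^{1/2}$ and the cyclic property,
\begin{equation*}
\tr{P_i^{\hat K}\Sigma_0} = \tr{(P_i^{\hat K})^{1/2}\,\Sigma_0\,(P_i^{\hat K})^{1/2}} \ge \sigma_{\min}(\Sigma_0)\,\tr{P_i^{\hat K}},
\end{equation*}
where the inequality follows because $\Sigma_0\succeq \sigma_{\min}(\Sigma_0)\,I$ and congruence by $(P_i^{\hat K})^{1/2}$ preserves the ordering. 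For the second, since $P_i^{\hat K}\succeq 0$ its spectral norm equals its largest eigenvalue, which is dominated by the sum of its (nonnegative) eigenvalues, so $\tr{P_i^{\hat K}}\ge \|P_i^{\hat K}\|$.

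Finally I would chain these together and pull out the minimum of the $\rho_i$. Combining the displays gives
\begin{equation*}
C(\hat K) \ge \sum_{i\in\Omega}\rho_i\,\sigma_{\min}(\Sigma_0)\,\|P_i^{\hat K}\| \ge \Big(\min_{i\in\Omega}\rho_i\Big)\sigma_{\min}(\Sigma_0)\sum_{i\in\Omega}\|P_i^{\hat K}\| = \mu\sum_{i\in\Omega}\|P_i^{\hat K}\|,
\end{equation*}
recalling $\mu=\min_{i\in\Omega}(\rho_i)\,\sigma_{\min}(\Sigma_0)$, and dividing by $\mu>0$ (guaranteed by the Assumptions) yields the claim. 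I do not anticipate a genuine obstacle here: the result is a short chain of trace/eigenvalue inequalities, and the only point requiring care is confirming that each $P_i^{\hat K}\succeq 0$ so that both trace inequalities apply, which is immediate from $Q_i\succ 0$ and $\hat K\in\mathcal{K}$.
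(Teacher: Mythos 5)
Your proof is correct and follows essentially the same route as the paper: lower-bound the cost via $\tr{P\,\Sigma_0}\ge\sigma_{\min}(\Sigma_0)\tr{P}$ and $\tr{P}\ge\|P\|$ for positive semidefinite $P$, then pull out $\min_{i\in\Omega}\rho_i$. The only cosmetic difference is that you apply these inequalities mode-by-mode before summing, whereas the paper applies them to the aggregate $\sum_{i\in\Omega}\rho_i P_i^{\hat K}$; the two orderings are equivalent.
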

\begin{proof}
Recall  $C(\hat{K}) = \Expx{\tr{\left( \sum_{i\in \Omega} \rho_i P_i^{\hat{K}} \right) x_0 x_0^T}}$. Therefore, we have
\begin{align*}
C(\hat{K}) 	&\geq \tr{ \sum_{i\in \Omega} \rho_i P_i^{\hat{K}} } \sigma_{\min}\left({\Expx{x_0x_0^T}}\right)\\
	&\geq  \left( \sum_{i\in \Omega} \| P_i^{\hat{K}} \| \right)  \min_{i\in\Omega}(\rho_i) \sigma_{\min}\left({\Expx{x_0x_0^T}}\right)
\end{align*}
which gives the desired lower bound.
\end{proof}

The next lemma bounds the cost for the one-step progress of the Gauss-Newton method. 
\begin{mylemma}\label{lemma:one_stepGN}
If $\hat{K}'= \hat{K}-2\eta \bmat{\psi_1 L_1(\hat{K}), \ldots,\psi_{n_s} L_{n_s}(\hat{K})}$
with  $\psi_i \coloneqq (R_i + B_i^T \mathcal{E}_i(P^{\hat{K}}) B_i)^{-1}$
and \(\eta \leq \frac{1}{2}\),
then the following inequality holds for any $\hat{K}\in \mathcal{K}$:
\begin{align*}
C(\hat{K}') - C(\hat{K}^*) &\leq \left( 1 -  \frac{2\eta\mu}{\| \chi_{\hat{K}^*} \|} \right)\left( C(\hat{K}) - C(\hat{K}^*) \right).
\end{align*}
\end{mylemma}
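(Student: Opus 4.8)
The plan is to combine the three structural lemmas already established --- the stability guarantee (Lemma~\ref{lemma:step_stable_GN}), the almost smoothness identity (Lemma~\ref{lemma:smooth}), and gradient domination (Lemma~\ref{lemma:grad_dom}) --- into a single-step contraction. First I would invoke Lemma~\ref{lemma:step_stable_GN}: since $\eta \le \frac{1}{2}$, the Gauss-Newton update $\hat{K}'$ stabilizes the MJLS in the mean square sense, so $\hat{K}' \in \mathcal{K}$ and $C(\hat{K}')$ is finite. This is the crucial prerequisite that licenses the use of the almost smoothness identity, which requires both $\hat{K}$ and $\hat{K}'$ to lie in $\mathcal{K}$.

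Writing $\hat{\Psi} \coloneqq \hat{R} + \hat{B}^T \EPhatnext \hat{B}$, the Gauss-Newton step reads $\Delta \hat{K} = 2\eta\, \hat{\Psi}^{-1}\hat{L}_{\hat{K}}$ in block-diagonal form, since $\hat{\Psi}^{-1} = \diag{\psi_1, \ldots, \psi_{n_s}}$. Substituting this into the identity of Lemma~\ref{lemma:smooth} and using the symmetry of $\hat{\Psi}$, the linear and quadratic terms collapse to a single expression:
\begin{align*}
C(\hat{K}') - C(\hat{K}) &= -4\eta(1-\eta)\,\tr{\chi_{\hat{K}'}\, \hat{L}_{\hat{K}}^T \hat{\Psi}^{-1} \hat{L}_{\hat{K}}}.
\end{align*}
Since $\chi_{\hat{K}'} \succeq 0$ and $\hat{\Psi}^{-1} \succ 0$, the trace is nonnegative, and $\eta \le \frac{1}{2}$ gives $1-\eta \ge \frac{1}{2}$, hence $-4\eta(1-\eta) \le -2\eta$. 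This yields the clean upper bound $C(\hat{K}') - C(\hat{K}) \le -2\eta\,\tr{\chi_{\hat{K}'}\, \hat{L}_{\hat{K}}^T \hat{\Psi}^{-1} \hat{L}_{\hat{K}}}$.

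Next I would lower-bound $\chi_{\hat{K}'}$. From the recurrence for $X_i(t)$ with $X_i(0) = \rho_i \Expx{x_0 x_0^T}$ and the nonnegativity of each $X_i(t)$, every diagonal block satisfies $\sumtinf X_i(t) \succeq \rho_i \Expx{x_0 x_0^T} \succeq \mu I$, so $\chi_{\hat{K}'} \succeq \mu I$. This reduces the bound to $C(\hat{K}') - C(\hat{K}) \le -2\eta\mu\,\tr{\hat{L}_{\hat{K}}^T \hat{\Psi}^{-1} \hat{L}_{\hat{K}}}$. Finally, the first inequality of Lemma~\ref{lemma:grad_dom} gives $\tr{\hat{L}_{\hat{K}}^T \hat{\Psi}^{-1} \hat{L}_{\hat{K}}} \ge \frac{C(\hat{K}) - C(\hat{K}^*)}{\|\chi_{\hat{K}^*}\|}$; inserting this and adding $C(\hat{K}) - C(\hat{K}^*)$ to both sides produces the claimed contraction factor $1 - \frac{2\eta\mu}{\|\chi_{\hat{K}^*}\|}$.

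The genuinely novel obstacle is the very first step: guaranteeing $\hat{K}' \in \mathcal{K}$ without a projection, which is precisely where the Lyapunov argument of Lemma~\ref{lemma:step_stable_GN} is indispensable --- in the LTI setting one could argue via the spectral radius, but the stochastic MJLS dynamics forbid this. Once feasibility of $\hat{K}'$ is secured, the remaining steps are essentially algebraic manipulations of the three lemmas and mirror the LTI analysis; the only MJLS-specific care needed is the block-diagonal bookkeeping in $\hat{\Psi}$, $\hat{L}_{\hat{K}}$, and $\chi$.
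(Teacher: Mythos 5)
Your proposal is correct and follows essentially the same route as the paper's proof: invoke Lemma~\ref{lemma:step_stable_GN} for feasibility of $\hat{K}'$, substitute the Gauss-Newton step into Lemma~\ref{lemma:smooth} to get the factor $-4\eta(1-\eta)\le -2\eta$, lower-bound $\chi_{\hat{K}'}$ by $\mu I$, and close with the first inequality of Lemma~\ref{lemma:grad_dom}. The only cosmetic differences are that you collapse the linear and quadratic terms into the single factor $-4\eta(1-\eta)$ where the paper keeps them separate, and you spell out the justification $\sumtinf X_i(t) \succeq X_i(0) \succeq \mu I$ that the paper leaves implicit.
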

\begin{proof}
First we can show that \(\hat{K}'\) is a stabilizing policy by applying Lemma~\ref{lemma:step_stable_GN}.
From Lemma \ref{lemma:smooth}, we have
\begin{align*}
C(\hat{K}')\! - C(\hat{K})\! &= -4\eta \tr{\!\chi_{\hat{K}'\!} \hat{L}_{\hat{K}}^T \hat{\Psi}^{-1} \hat{L}_{\hat{K}}}\! \\
&\qquad \qquad + 4 \eta^2 \tr{\!\chi_{\hat{K}'} \hat{L}_{\hat{K}}^T \hat{\Psi}^{-1} \hat{L}_{\hat{K}}\!} \\
	&\leq -2\eta \tr{\chi_{\hat{K}'} \hat{L}_{\hat{K}}^T \hat{\Psi}^{-1} \hat{L}_{\hat{K}}} \\
	&\leq -2\eta \sigma_{\min}(\chi_{\hat{K}'}) \tr{\hat{L}_K^T \hat{\Psi}^{-1} \hat{L}_{\hat{K}}} \\
	&\leq -2\eta \mu \tr{\hat{L}_{\hat{K}}^T (\hat{R} + \hat{B}^T\EPhatnext \hat{B})^{-1} \hat{L}_{\hat{K}}} \\
	&\leq -2\eta \frac{\mu}{\| \chi_{\hat{K}^*} \|} \left(C(\hat{K}) - C(\hat{K}^*)\right)
\end{align*}
where the last step follows from Lemma~\ref{lemma:grad_dom}.
\end{proof}

The next lemma bounds the cost of the one-step progress of the natural policy gradient method.
\begin{mylemma}\label{lemma:one_step}
Suppose $\hat{K}\in \mathcal{K}$.
If $\hat{K}' = \hat{K} - \eta \nabla C(\hat{K}) \chi_{\hat{K}}^{-1}$ and   
\begin{equation*}
 \eta \leq \frac{1}{2\| \hat{R} + \hat{B}^T \EPhatnext \hat{B} \|},
\end{equation*}
then the following inequality holds
\begin{align*}
C(\hat{K}') - C(\hat{K}^*) &\leq \left( 1 - \frac{ 2\eta\mu \sigma_{\min}(\hat{R})}{\| \chi_{\hat{K}^*} \|} \right)\left( C(\hat{K}) - C(\hat{K}^*) \right).
\end{align*}
\end{mylemma}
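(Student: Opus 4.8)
The plan is to mirror the proof of Lemma~\ref{lemma:one_stepGN} for the Gauss--Newton case, following the same three-step template: first establish that the updated policy $\hat{K}'$ is again mean-square stabilizing, then invoke the ``almost smoothness'' identity of Lemma~\ref{lemma:smooth} to express $C(\hat{K}')-C(\hat{K})$ exactly, and finally apply the gradient domination bound of Lemma~\ref{lemma:grad_dom} to turn the resulting expression into a contraction on the optimality gap. The only place where the argument genuinely departs from the Gauss--Newton case is in controlling the quadratic-in-$\eta$ term produced by almost smoothness, and this is exactly where the step-size bound $\eta \le \frac{1}{2\|\hat{R}+\hat{B}^T\EPhatnext\hat{B}\|}$ (rather than the simpler $\eta\le\tfrac12$) enters.

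First I would note that, under the stated step-size bound, Lemma~\ref{lemma:step_stable} guarantees $\hat{K}'\in\mathcal{K}$; this is what licenses Lemma~\ref{lemma:smooth} and Lemma~\ref{lemma:grad_dom}, both of which require $\hat{K}'$ to be stabilizing so that $C(\hat{K}')$ is finite and the associated $P^{\hat{K}'}_i$ exist. Next I would substitute the natural gradient step into the almost smoothness identity. Since $\nabla C(\hat{K})\chi_{\hat{K}}^{-1} = 2[L_1(\hat{K}),\ldots,L_{n_s}(\hat{K})]$, we have $K_i-K_i' = 2\eta L_i(\hat{K})$, so in the block-diagonal notation of Lemma~\ref{lemma:smooth} the increment is $\Delta\hat{K} = 2\eta\,\hat{L}_{\hat{K}}$. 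Plugging this in yields
\begin{align*}
C(\hat{K}')-C(\hat{K}) &= -4\eta\tr{\chi_{\hat{K}'}\hat{L}_{\hat{K}}^T \hat{L}_{\hat{K}}} \\
&\quad + 4\eta^2\tr{\chi_{\hat{K}'}\hat{L}_{\hat{K}}^T\left(\hat{R}+\hat{B}^T\EPhatnext\hat{B}\right)\hat{L}_{\hat{K}}}.
\end{align*}

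The crux is to dominate the quadratic term by half of the linear one. Using $\hat{R}+\hat{B}^T\EPhatnext\hat{B} \preceq \|\hat{R}+\hat{B}^T\EPhatnext\hat{B}\|\,I$ together with $\chi_{\hat{K}'}\succeq 0$ bounds the quadratic term by $4\eta^2\|\hat{R}+\hat{B}^T\EPhatnext\hat{B}\|\tr{\chi_{\hat{K}'}\hat{L}_{\hat{K}}^T\hat{L}_{\hat{K}}}$, and the step-size bound makes $2\eta\|\hat{R}+\hat{B}^T\EPhatnext\hat{B}\|\le 1$, so this is at most $2\eta\tr{\chi_{\hat{K}'}\hat{L}_{\hat{K}}^T\hat{L}_{\hat{K}}}$. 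Hence $C(\hat{K}')-C(\hat{K}) \le -2\eta\tr{\chi_{\hat{K}'}\hat{L}_{\hat{K}}^T\hat{L}_{\hat{K}}}$. I would then lower-bound $\tr{\chi_{\hat{K}'}\hat{L}_{\hat{K}}^T\hat{L}_{\hat{K}}} \ge \sigma_{\min}(\chi_{\hat{K}'})\tr{\hat{L}_{\hat{K}}^T\hat{L}_{\hat{K}}} \ge \mu\tr{\hat{L}_{\hat{K}}^T\hat{L}_{\hat{K}}}$, where $\sigma_{\min}(\chi_{\hat{K}'})\ge\mu$ follows from $\sum_t X_i(t)\succeq X_i(0)=\rho_i\Expx{x_0x_0^T}$. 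Finally, the second inequality in Lemma~\ref{lemma:grad_dom} gives $\tr{\hat{L}_{\hat{K}}^T\hat{L}_{\hat{K}}} \ge \frac{\sigma_{\min}(\hat{R})}{\|\chi_{\hat{K}^*}\|}\left(C(\hat{K})-C(\hat{K}^*)\right)$; substituting and adding $C(\hat{K})-C(\hat{K}^*)$ to both sides produces the claimed contraction factor $1-\frac{2\eta\mu\sigma_{\min}(\hat{R})}{\|\chi_{\hat{K}^*}\|}$.

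The main obstacle is genuinely this quadratic term. Unlike the Gauss--Newton update, whose preconditioner $\psi_i$ collapses the quadratic form into $\hat{L}_{\hat{K}}^T(\hat{R}+\hat{B}^T\EPhatnext\hat{B})^{-1}\hat{L}_{\hat{K}}$ and makes $\eta\le\tfrac12$ sufficient, the natural gradient leaves $\hat{R}+\hat{B}^T\EPhatnext\hat{B}$ un-inverted, so one must pay an extra factor of its operator norm. This is precisely why the admissible step size shrinks to $\eta\le\frac{1}{2\|\hat{R}+\hat{B}^T\EPhatnext\hat{B}\|}$ and why the convergence rate carries the additional $\sigma_{\min}(\hat{R})$ factor (the contraction now leans on the weaker second inequality of gradient domination rather than the first). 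Everything else is a routine transcription of the Gauss--Newton argument.
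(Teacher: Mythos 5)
Your proposal is correct and follows essentially the same route as the paper's own proof: stability of $\hat{K}'$ via Lemma~\ref{lemma:step_stable}, substitution of $\Delta\hat{K}=2\eta\,\hat{L}_{\hat{K}}$ into Lemma~\ref{lemma:smooth}, absorption of the quadratic term using $\tr{\chi_{\hat{K}'}\hat{L}_{\hat{K}}^T\hat{\Psi}\hat{L}_{\hat{K}}}\le\|\hat{R}+\hat{B}^T\EPhatnext\hat{B}\|\tr{\chi_{\hat{K}'}\hat{L}_{\hat{K}}^T\hat{L}_{\hat{K}}}$ together with the step-size bound, the lower bound $\sigma_{\min}(\chi_{\hat{K}'})\ge\mu$, and finally the second inequality of Lemma~\ref{lemma:grad_dom}. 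Your added justification that $\sigma_{\min}(\chi_{\hat{K}'})\ge\mu$ follows from $\sum_t X_i(t)\succeq X_i(0)=\rho_i\Expx{x_0 x_0^T}$ is a detail the paper leaves implicit, but it does not change the argument.
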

\begin{proof}
First we can show that \(\hat{K}'\) is a stabilizing policy by applying Lemma~\ref{lemma:step_stable}.
Then we can obtain the following bound:
\begin{align*}
\tr{\chi_{\hat{K}'} \hat{L}_{\hat{K}}^T \hat{\Psi} \hat{L}_{\hat{K}}} \leq  \|(\hat{R} + \hat{B}^T \EPhatnext \hat{B})\| \tr{\chi_{\hat{K}'} \hat{L}_{\hat{K}}^T \hat{L}_{\hat{K}}} 
\end{align*}
Combining the above inequality with  Lemma \ref{lemma:smooth},  we can show
\begin{align*}
C(\hat{K}')\! - C(\hat{K})\! &= -4\eta \tr{\!\chi_{\hat{K}'\!} \hat{L}_{\hat{K}}^T \hat{L}_{\hat{K}}}\! + 4\eta^2 \tr{\!\chi_{\hat{K}'} \hat{L}_{\hat{K}}^T \hat{\Psi} \hat{L}_{\hat{K}}\!} \\
	&\leq -2\eta \tr{\chi_{\hat{K}'} \hat{L}_{\hat{K}}^T \hat{L}_{\hat{K}}} \\
	&\leq -2\eta \sigma_{\min}(\chi_{\hat{K}'}) \tr{\hat{L}_K^T \hat{L}_{\hat{K}}} \\
	&\leq -2\eta \mu \tr{\hat{L}_{\hat{K}}^T \hat{L}_{\hat{K}}} \\
	&\leq -\frac{2\eta\mu \sigma_{\min}(\hat{R})}{\| \chi_{\hat{K}^*} \|} \left(C(\hat{K}) - C(\hat{K}^*)\right)
\end{align*}
where the last step follows from Lemma~\ref{lemma:grad_dom}.
\end{proof}

Now we are ready to prove Theorem~\ref{thm:conv}.
\begin{proof} (of Theorem~\ref{thm:conv}, Gauss-Newton case)
Since Lemma~\ref{lemma:one_stepGN} holds for any $\eta \leq \frac{1}{2}$, we have the following contraction at every step:
\begin{align*}
C(\hat{K}^{n+1}) - C(\hat{K}^*) &\leq \left( 1 -  \frac{2\eta\mu}{\| \chi_{\hat{K}^*} \|} \right)  \left( C(\hat{K}^n) - C(\hat{K}^*) \right).
\end{align*}
Then we can obtain the final result using induction.
\end{proof}
\begin{proof} (of Theorem~\ref{thm:conv}, the natural policy gradient case)
By Lemma \ref{lemma:bound}, we have the following bound on the step size:
\begin{align*}
\frac{1}{\| \hat{R} + \hat{B}^T \EPhatnext \hat{B} \|} &\geq \frac{1}{\| \hat{R}\| + \|\hat{B}\|^2 \| \EPhatnext \|} \\
	&\geq \frac{1}{\| \hat{R}\| + \|\hat{B}\|^2 \left( \sum_{i \in \Omega} \| P_i^{\hat{K}} \| \right)}\\
	&\geq \frac{1}{\| \hat{R}\| + \frac{\|\hat{B}\|^2 C(\hat{K})}{\mu}},
\end{align*}
where the second step follows from 
\begin{align*}
\| \EPhatnext \| &\leq \max_{i\in \Omega} \| \EPKnext \| = \max_{i\in \Omega} \| \sum_{j\in \Omega} p_{ij} P_j^{\hat{K}} \| \\
&\qquad \leq \max_{i\in \Omega} \sum_{j\in \Omega} p_{ij} \| P_j^{\hat{K}} \| \leq \sum_{j\in \Omega} \| P_j^{\hat{K}} \|.
\end{align*}
The proof can be completed by induction: For the first step we have \(C(\hat{K}^1) \leq C(\hat{K}^0)\), which is due to Lemma~\ref{lemma:one_step}.
The proof proceeds by arguing that Lemma~\ref{lemma:one_step} can be applied at every step.
If it were the case that \(C(\hat{K}^n) \leq C(\hat{K}^0)\), then
\begin{align*}
\eta &\leq \frac{1}{2\left(\| \hat{R} \| + \frac{\| \hat{B} \|^2 C(\hat{K}^0)}{\mu}\right)} \leq  \frac{1}{2\left(\| \hat{R} \| + \frac{\| \hat{B} \|^2 C(\hat{K}^n)}{\mu}\right)} \\
&\qquad \leq \frac{1}{2\| \hat{R} + \hat{B}^T \hat{\mathcal{E}}(P^{\hat{K}^n}) \hat{B} \|}
\end{align*}
and so we can apply Lemma~\ref{lemma:one_step} to get
\begin{align*}
C(\hat{K}^{n+1}) - C(\hat{K}^*) &\leq \left( 1 -  \frac{2\eta\mu\sigma_{\min}(\hat{R})}{\| \chi_{\hat{K}^*} \|} \right) \times \\
&\qquad \qquad \left( C(\hat{K}^n) - C(\hat{K}^*) \right).
\end{align*}
Obviously, now we have \(C(\hat{K}^{n+1}) \leq C(\hat{K}^0)\) and can repeat the above argument for the next step.
Therefore, the desired conclusion follows from induction.
\end{proof}

\end{document}